\documentclass{birkjour}
\pdfoutput=1
%
%
%
 \newtheorem{thm}{Theorem}[section]

 \newtheorem{prop}[thm]{Proposition}
 \theoremstyle{definition}
 \newtheorem{defn}[thm]{Definition}
 \theoremstyle{remark}

 \numberwithin{equation}{section}

\usepackage{algorithm}
\usepackage[noend]{algpseudocode}
\usepackage{euscript}
\def\cal{\EuScript}
\usepackage{url}
\usepackage{bbm}

\def\BA{{\bf A}}
 \def\Be{{\bf e}}

\def\BJ{{\bf J}} 
\def\CJ{\mbox{\boldmath$\cal J$}}
\def\BP{{\bf P}} 

\mathcode`\@="8000
{\catcode`\@=\active \gdef@{\mkern1mu}} 
\newcommand{\A}{{\mathcal A}}
\newcommand{\ess}{{\mathrm{ess}}}
\newcommand{\N}{{\mathbbm{N}}}
\newcommand{\R}{{\mathbbm{R}}}
\newcommand{\Z}{{\mathbbm{Z}}}
\newcommand{\tr}{{\mathrm{tr}}}
\DeclareMathOperator*{\slim}{s-lim}
\def\a{{\sf a}}
\def\b{{\sf b}}
\def\c{{\sf c}}
\def\d{{\sf d}}
\def\bcd{{\rm dim}_{B}}
\def\eps{\varepsilon}

\begin{document}

%

\title[Spectral Approximation for Quasiperiodic Jacobi Operators]{Spectral Approximation for Quasiperiodic\\ Jacobi Operators}

\author{Charles Puelz}

\address{%
Department of Computational and Applied Mathematics\\
Rice University\\
6100 Main Street -- MS 134\\
Houston, Texas\ \ 77005--1892\\
USA
}

\email{cpuelz@rice.edu}


\author{Mark Embree}
\address{%
Department of Mathematics\\
Virginia Tech\\
225 Stanger Street -- 0123\\
Blacksburg, Virginia\ \ 24061\\
USA
}
\email{embree@vt.edu}

\author{Jake Fillman}
\address{%
Department of Mathematics\\
Rice University\\
6100 Main Street -- MS 136\\
Houston, Texas\ \ 77005--1892\\
USA
}
\email{jdf3@rice.edu}

\subjclass{Primary 47B36, 65F15, 81Q10; Secondary 15A18, 47A75}
\keywords{Jacobi operator, Schr\"odinger operator, quasicrystal, Fibonacci, period doubling, Thue--Morse}

\begin{abstract}
Quasiperiodic Jacobi operators arise as mathematical models of quasicrystals 
and in more general studies of structures exhibiting aperiodic order.  
The spectra of these self--adjoint operators can be quite exotic, such as Cantor sets,
and their fine properties yield insight into the associated quantum dynamics, 
that is, the one--parameter unitary group 
that solves the time--dependent Schr\"odinger equation.
Quasiperiodic operators can be approximated by periodic ones, the spectra of which
can be computed via two finite dimensional eigenvalue problems.  Since long periods 
are necessary for detailed approximations, both computational efficiency and 
numerical accuracy become a concern. We describe a simple method for 
numerically computing the spectrum of a period--$K$ Jacobi operator in $O(K^2)$ operations, 
then use the algorithm to investigate the spectra of Schr\"odinger operators 
with Fibonacci, period doubling, and Thue--Morse potentials.
\end{abstract}

\maketitle


\section{Introduction}
For given sets of parameters $\{a_n\}, \{b_n\} \in \ell^{\infty}(\Z)$, 
the corresponding \emph{Jacobi operator} $\CJ : \ell^2(\Z) \rightarrow\ell^2(\Z)$ is defined entrywise by
\begin{equation}
\label{eq0}
(\CJ \psi)_n = a_{n-1} \psi_{n-1} + b_n \psi_n + a_{n} \psi_{n+1} 
\end{equation}
for all $\{\psi_n \} \in \ell^2(\Z)$.  
When there exists a positive integer $K$ such that
\[
a_n = a_{n+K}, \quad b_n = b_{n+K} \rlap{\qquad for all $n\in\Z$,}
\] 
the Jacobi operator is said to be \emph{periodic} with \emph{period}~$K$.
Here we are interested in computing the spectrum $\sigma(\CJ)$
of such an operator when the period $K$ is very long,
as a route to high fidelity numerical approximations 
of the more intricate spectra of operators with aperiodic coefficients.
These spectra are important, as they can help us understand the quantum dynamics of 
solutions to the time-dependent Schr\"odinger equation~\cite{Las96}.

The spectrum of a period-$K$ Jacobi operator can be calculated from classical Floquet--Bloch theory, the relevant highlights of which we briefly recapitulate for later reference. (Our presentation most closely follows Toda~\cite[Ch.~4]{Tod89}; see also, e.g., \cite[Ch.~5]{Sim11}, \cite[Ch.~7]{Tes00},\cite{vMo76}.) For a scalar~$E$, any solution to $\CJ\psi = E \psi$ satisfies
\begin{equation} \label{eq:MKp}
\begin{bmatrix} \psi_{pK+1} \\ \psi_{pK} \end{bmatrix} 
= 
M_K^p \begin{bmatrix} \psi_{1} \\ \psi_{0} \end{bmatrix}
\end{equation}
for each $p \in \Z$, where $M_K \equiv M_K(E)$ denotes the $2\times 2$ \emph{monodromy matrix}
\begin{equation} \label{eq:monod}
 M_K = 
   \begin{bmatrix} {E-b_K\over a_K} & -{a_{K-1}\over a_K} \\[.5em] 1 & 0 \end{bmatrix}
   \cdots
   \begin{bmatrix} {E-b_2\over a_2} & -{a_{1}\over a_2} \\[.5em] 1 & 0 \end{bmatrix}
   \begin{bmatrix} {E-b_1\over a_1} & -{a_{K}\over a_1} \\[.5em] 1 & 0 \end{bmatrix}.
\end{equation}
Note that 
\begin{equation} \label{eq:detMK}
 \det(M_K) = {a_{K-1} \over a_K} \cdots {a_1\over a_2} {a_K\over a_1} = 1.
\end{equation}
Now when $\CJ$ is periodic, $E\in\sigma(\CJ)$ provided $\CJ\psi = E\psi$ with bounded nontrivial $\psi=\{\psi_n\}$,%
\footnote{The spectrum of a Jacobi operator $\CJ$ is given by the closure of the set of $E \in \R$ for which a nontrivial polynomially bounded solution to $\CJ \psi = E \psi$ exists. When $\CJ$ is periodic, one either has a bounded solution or all solutions grow exponentially on at least one half-line.} 
which by~(\ref{eq:MKp}) and (\ref{eq:detMK}) requires
the eigenvalues of $M_K$ to have unit modulus. 
From
\begin{equation} \label{eq:charpoly}
\det(\gamma - M_K) =  \gamma^2 - \tr(M_K) \gamma + 1,\end{equation}
where $\tr(\cdot)$ denotes the trace, the eigenvalues of $M_K$ have
unit modulus when 
\begin{equation} \label{eq:disc}
-2 \le \tr(M_K) \le 2.
\end{equation}
Since $\tr(M_k)$ is a degree-$K$ polynomial in $E$, in principle we can find $\sigma(\CJ)$ by solving the two polynomial equations $\tr(M_K(E)) = \pm 2$, giving $\sigma(\CJ)$ as the union of $K$ real intervals. For large~$K$ such numerical computations can incur significant errors, as illustrated in~\cite[Sec.~7.1]{DEG12}.  
Alternatively, note that if $[\psi_1, \psi_0]^T$ is an eigenvector of $M_K$ associated with
eigenvalue $\gamma=e^{i \theta}$, then 
by  definition~(\ref{eq0}) and periodicity, $\CJ\psi = E \psi$ and $\psi_{K+j}=\gamma \kern1pt \psi_j$ imply
\[ 
 \begin{bmatrix}
   b_1 & a_1 &  &  & e^{-i\theta} a_K \\
   a_1 & b_2 & \ddots &  &  \\
     & \ddots & \ddots & \ddots &  \\
     &  & \ddots & b_{K-1} & a_{K-1} \\
   e^{i \theta}  a_K &  &  & a_{K-1} & b_K
 \end{bmatrix}
     \begin{bmatrix} \psi_{1} \\[.3em] \psi_2 \\[.3em] \vdots \\[.3em] \psi_{K-1} \\[.3em] \psi_{K} \end{bmatrix}
  = E \begin{bmatrix} \psi_{1} \\[.3em] \psi_2 \\[.3em] \vdots \\[.3em] \psi_{K-1} \\[.3em] \psi_{K} \end{bmatrix},
\]
where all unspecified entries in the matrix equal zero.
Solving this $K\times K$ symmetric matrix eigenvalue problem
for any $\theta\in[0,2\pi)$ gives $K$ points in $\sigma(\CJ)$, one in each of the $K$ intervals.
We can compute $\sigma(\CJ)$ directly by noting that $\theta=0$ and $\theta=\pi$ give
the endpoints of these intervals.  We shall thus focus on the two $K\times K$ symmetric matrices
\begin{equation}
\label{eq:Jpm}
 \mathbf{J}_\pm =
 \begin{bmatrix}
   b_1 & a_1 &  &  & \pm a_K \\
   a_1 & b_2 & \ddots &  &  \\
     & \ddots & \ddots & \ddots &  \\
     &  & \ddots & b_{K-1} & a_{K-1} \\
   \pm  a_K &  &  & a_{K-1} & b_K
 \end{bmatrix}.
 \end{equation}
More precisely, enumerating the $2K$ eigenvalues of $\mathbf{J}_+$ and $\BJ_-$ such that
\begin{equation} \label{eq:interleave}
E_1 < E_2 \leq E_3 < E_4 \leq \cdots \leq E_{2K-1} < E_{2K}
\end{equation}
(strict inequalities separate eigenvalues from $\BJ_+$ and $\BJ_-$), then 
\begin{equation}
\label{eq1}
\sigma(\CJ) = \bigcup_{j=1}^{K} \ [E_{2j-1}, E_{2j}].
\end{equation}
This discussion suggests three ways to compute the spectrum~(\ref{eq1}):
\begin{enumerate}
\item For a fixed~$E$, test if $E\in\sigma(\CJ)$ by explicitly calculating
      $\tr(M_k(E))$ and checking if~(\ref{eq:disc}) holds;
\item Construct the degree-$K$ polynomial $\tr(M_K(E))$ and find the roots of
      $\tr(M_K(E)) = \pm 2$;
\item Compute the eigenvalues of the two $K\times K$ symmetric matrices $\BJ_\pm$.
\end{enumerate}
The first method gives a fast way to test if $E\in\sigma(\CJ)$ for a given $E$,
but is an ineffective way to compute the entire spectrum.  
The second method suffers from the numerical
instabilities mentioned earlier.  The third approach is most favorable, but 
$O(K^3)$ complexity and numerical inaccuracies in the computed eigenvalues 
become a concern for large $K$.

Jacobi operators with long periods arise as approximations to
Schr\"o\-ding\-er operators with aperiodic potentials. 
For one special case (the almost Mathieu potential), 
Thouless~\cite{Tho83} and Lamoureux~\cite{Lam97} proposed an $O(K^2)$ algorithm
to compute the spectra of $\BJ_\pm$ for the period-$K$ approximation.
Section~\ref{sec:alg} describes a simpler $O(K^2)$ algorithm 
that does not exploit special properties of the potential, and so applies 
to any period-$K$ Jacobi operator.
Section~\ref{sec:quasi} addresses aperiodic potentials in some detail,
showing how their spectra can be covered by those of periodic approximations.  
Section~\ref{sec:exp} shows application of our algorithm to estimate the fractal dimension 
of the spectrum for aperiodic operators with potential given by primitive substitution rules.
\section{Algorithm} \label{sec:alg}
The conventional algorithm for finding all eigenvalues of a general symmetric matrix 
$\BA$ first requires the application of unitary similarity transformations to reduce
the matrix to tridiagonal form (in which all entries other than those on the main diagonal 
and the first super- and sub-diagonals are zero).  This transformation is 
the most costly part of the eigenvalue computation: for a $K\times K$
matrix, the reduction takes $O(K^3)$ operations, while the eigenvalues
of the tridiagonal matrix can be found to high precision 
in $O(K^2)$ further operations~\cite[\S8.15]{Par98}.
When many entries of $\BA$ are zero, one might exploit this structure to 
perform fewer elementary similarity transformations.
This is the case when $\BA$ is banded:
$a_{j,k} = 0$ when $|j-k|>b$, where $b$ is the \emph{bandwidth} of $\BA$.\ \ 
For fixed $b$, the tridiagonal reduction takes $O(K^2)$ operations
as $K\to\infty$.

The matrix $\BJ_\pm$ in~(\ref{eq:Jpm}) is tridiagonal plus entries in
the $(1,K)$ and~$(K,1)$ positions that give it bandwidth $b=K$.\ \  
One might still hope to somehow exploit the zero structure to 
quickly reduce $\BJ_\pm$ to tridiagonal form.
Unfortunately, the transformation that eliminates the $(1,K)$ and $(K,1)$
entries creates new nonzero entries where once there were zeros, starting a 
cascade of new nonzeros whose sequential elimination leads again to $O(K^3)$ 
complexity.  Figure~\ref{fig:spyplotsA} shows how the conventional approach
to tridiagonalizing a sparse matrix (using Givens plane rotations) eventually
creates $O(K^2)$ nonzero entries, requiring $O(K^2)$ storage and
$O(K^3)$ floating point operations.

\begin{figure}
\begin{center}
   \includegraphics[scale=0.33]{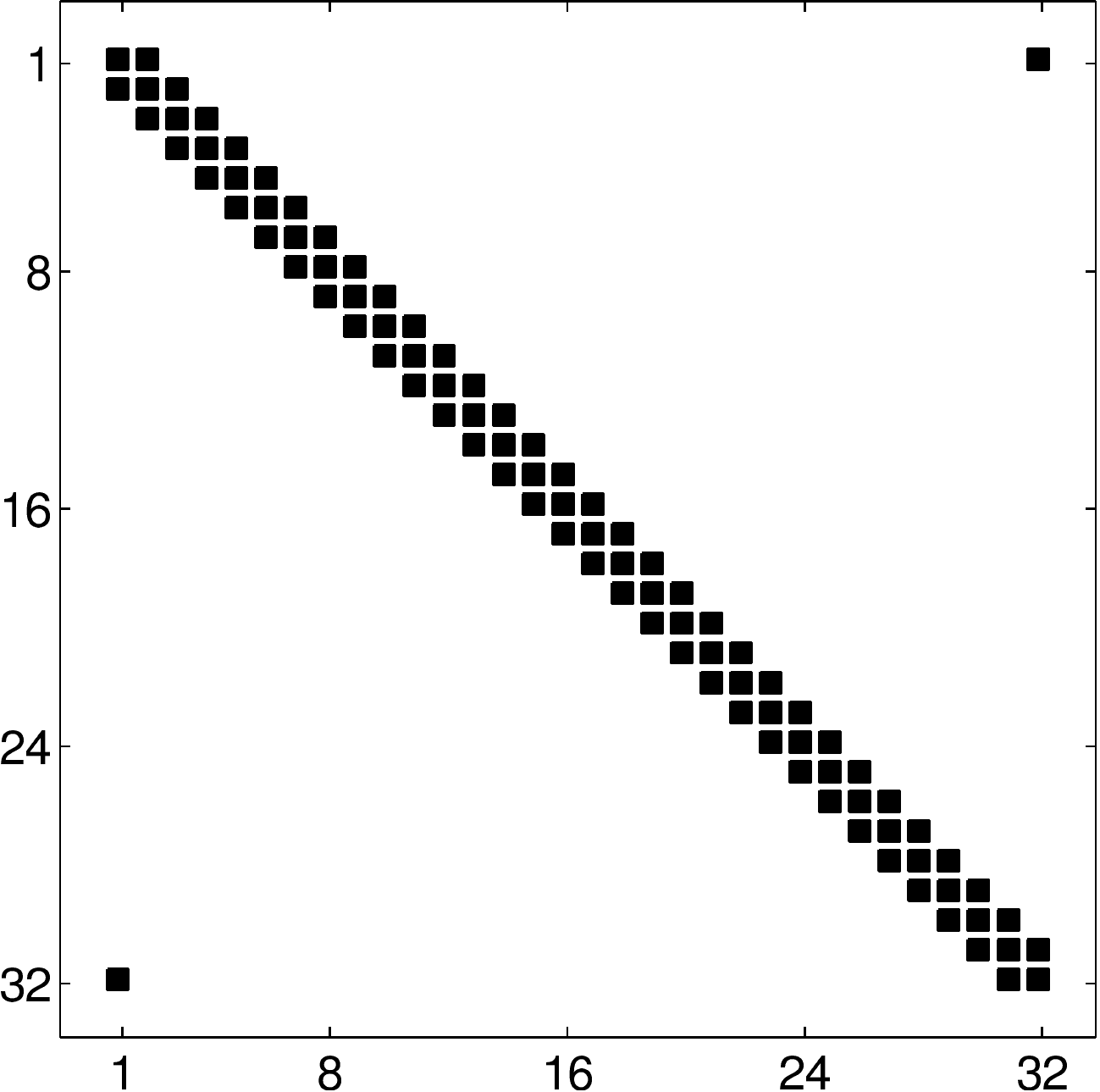} \qquad
   \includegraphics[scale=0.33]{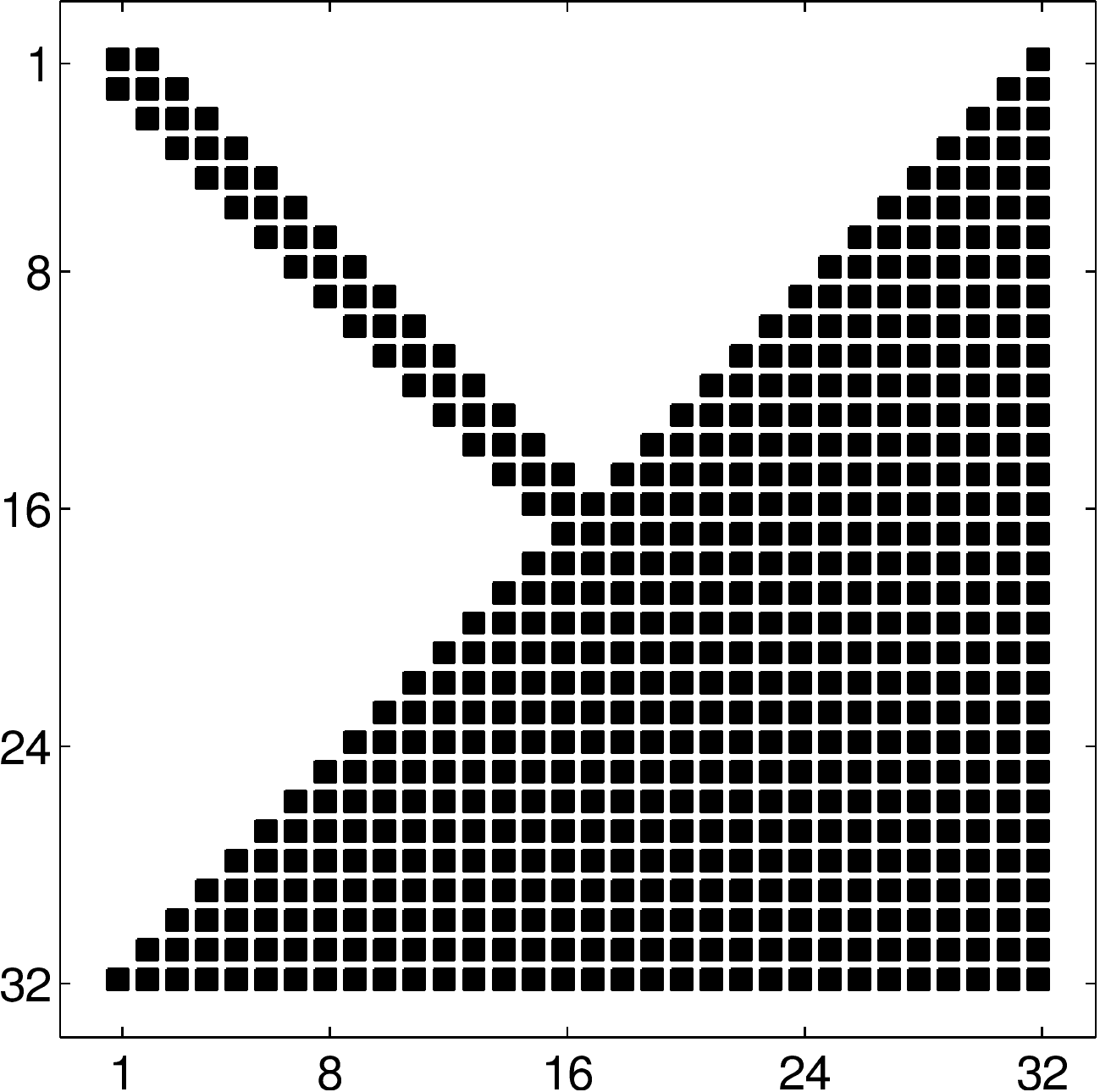}
\end{center}
\vspace*{-5pt}
\caption{\label{fig:spyplotsA}
The nonzero pattern of $\BJ_\pm$ for $K=32$ (left) and an illustration of all the
entries that are nonzero at some point in the transformation of $\BJ_\pm$ to
tridiagonal form using the conventional plane rotation approach (right).}
\vspace*{-10pt}
\end{figure}

The challenge is magnified by the eigenvalues themselves.
As described in Section~\ref{sec:quasi}, we approximate 
aperiodic operators whose spectra are Cantor sets, implying
that the eigenvalues of $\sigma(\BJ_\pm)$ will be tightly clustered.
The accuracy of these computed eigenvalues is critical to the
applications we envision (e.g., estimating the fractal dimension
of the Cantor sets), warranting use of extended (quadruple)
precision arithmetic that magnifies the cost of $O(K^3)$ operations.
Moreover, these studies often examine a family of operators over 
many parameter values (e.g., as the $\{b_n\}$ terms are scaled),
so expedient algorithms are helpful even when $K$ is tractable for a
single matrix.

Thouless~\cite{Tho83} and, in later, more complete work, Lamoureux~\cite{Lam97} 
provide an explicit unitary matrix 
$\mathbf{Q}_\pm$ such that $\mathbf{Q}^*_\pm \mathbf{J}_\pm^{} \mathbf{Q}_\pm^{}$ 
is tridiagonal in the special case of the almost Mathieu potential,
\[ a_n = 1, \qquad b_n = 2 \lambda \cos(2\pi(n \alpha + \theta)), 
\]
where $\alpha$ is a rational approximation to an irrational parameter of
true interest, and $\lambda$ and $\theta$ are constants.
This transformation gives an $O(K^2)$ algorithm for computing 
$\sigma(\BJ_\pm)$, but relies on the special structure
of the potential.

Alternatively, we note that one can compute
all eigenvalues of $\BJ_\pm$ in $O(K^2)$ time by
simply reordering the rows and columns of $\BJ_\pm$
to yield a matrix with small bandwidth that is independent of $K$.\ \ 
Recall that the nonzero pattern of a symmetric matrix $\BA$ can be 
represented as an undirected graph having vertices labeled $1, \ldots, K$,
with distinct vertices $j$ and $k$ joined by an edge if $a_{j,k} = a_{k,j} \ne 0$.
(We suppress the loops corresponding to $a_{j,j} \ne 0$.)
The graph for $\BJ_\pm$ has a single cycle,
shown in Figure~\ref{fig:reorder8} for $K=8$.\ \ 
In the conventional labeling, the corner entries in the
$(1,K)$ and $(K,1)$ positions give an edge between vertices~$1$ and~$K$.
To obtain a matrix with narrow bandwidth, simply relabel the
vertices in a breadth-first fashion starting from vertex~1, as 
illustrated in Figure~\ref{fig:reorder8}.
Now each vertex only connects
to vertices whose labels differ by at most \emph{two}:
if we permute the rows and columns of the matrix in accord with 
the relabeling, the resulting matrix will have bandwidth~2 (i.e.,
a \emph{pentadiagonal} matrix).
More explicitly, define
\[ p(j) = \left\{\begin{array}{ll} 2j-1, & j \in \{1,\ldots, \lceil K/2\rceil\};\\[.25em]
                                   2(K-j-1), & j \in \{\lceil K/2\rceil +1,\ldots, K\}.
          \end{array}\right.\]
Let $\BP = [\Be_{p(1)}, \Be_{p(2)}, \ldots, \Be_{p(K)}]$,
where $\Be_j$ is the $j$th column of the $K\times K$ identity matrix;
then $\BP\BJ_\pm\BP^*$ has bandwidth $b=2$. 
When $K=8$,
\[
\BJ_\pm = \begin{bmatrix}
                   b_1 & a_1 & & & & & & \pm a_8 \\
                   a_1 & b_2 & a_2 & & & & & \\
                   & a_2 & b_3 & a_3 & & & &  \\
                   & & a_3 & b_4 & a_4 & & &  \\
                   & & & a_4 & b_5 & a_5 & &  \\
                   & & & & a_5 & b_6 & a_6 &  \\
                   & & & & & a_6 & b_7 & a_7   \\
                   \pm a_8 & & & & & & a_7 & b_8   
          \end{bmatrix}
\]
is reordered to
\[\hspace*{-15.5pt}
\BP\BJ_\pm\BP^* = \begin{bmatrix}
                   b_1 & \pm a_8 & a_1 & & & & &  \\
                   \pm a_8 & b_8 & & a_7 & & & & \\
                   a_1 &  & b_2 &  & a_2& & &  \\
                   & a_7 &  & b_7 &  & a_6 & &  \\
                   & & a_2 & & b_3 &  & a_3 &  \\
                   & & & a_6& & b_6 &  & a_5  \\
                   & & & & a_3 &  & b_4 & a_4   \\
                   & & & & & a_5 & a_4 & b_5   
          \end{bmatrix},
\]
where unspecified entries are zero.
The tridiagonal reduction
of banded symmetric matrices has been carefully studied,
starting with Rut\-is\-hauser%
\footnote{Rutishauser was motivated by pentadiagonal matrices
that arise in the addition of continued fractions.}~\cite{Rut63};
see~\cite{BLS00} for contemporary algorithmic considerations.  
Such matrices can be reduced to tridiagonal form 
using Givens plane rotations applied in a ``bulge-chasing'' 
procedure that increases the bandwidth by one;
Figure~\ref{fig:spyplotsB} shows the nonzero entries introduced
by this reduction.%
\footnote{An alternative method that finds the eigenvalues of a 
banded symmetric matrix directly in its band form is described in~\cite[\S8.16]{Par98}.
This method, based on small Householder reflectors, is particularly effective when
a small subset of the spectrum is sought.}
Removing the $(j+2,j)$ entry introduces a new entry in the third subdiagonal
(a ``bulge'') that is ``chased'' toward the bottom right with 
$O(K)$ additional Givens rotations, each of which requires $O(1)$ floating
point operations.  
Performing this exercise for $j=1,\ldots, K-2$ amounts to $O(K^2)$
work and $O(K)$ storage
to reduce $\BP\BJ_\pm\BP^*$ to tridiagonal form, an improvement
over the usual $O(K^3)$ work and $O(K^2)$ storage.
(Our application does not require the eigenvectors of $\BJ_\pm$, so we 
do not store the transformations
that tridiagonalize $\BJ_\pm$.)

\begin{figure}
\begin{center}
   \includegraphics[scale=0.38]{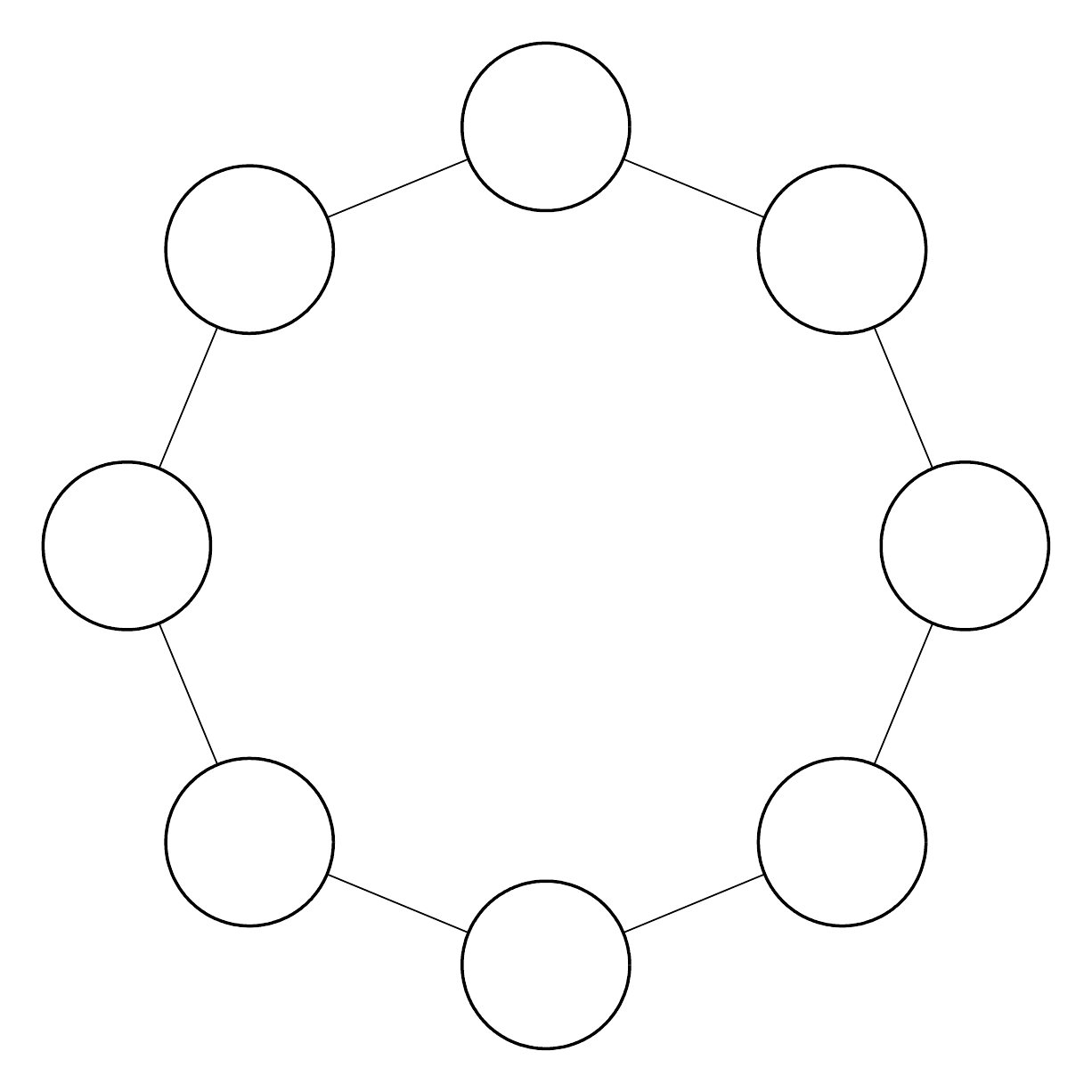}
   \begin{picture}(0,0)
     \put(-75.5,114){\makebox(10,10)[c]{1}}
     \put(-39,99){\makebox(10,10)[c]{2}}
     \put(-23.5,62){\makebox(10,10)[c]{3}}
     \put(-39,25){\makebox(10,10)[c]{4}}
     \put(-75.5,10){\makebox(10,10)[c]{5}}
     \put(-112,25){\makebox(10,10)[c]{6}}
     \put(-127.5,62){\makebox(10,10)[c]{7}}
     \put(-112,99){\makebox(10,10)[c]{8}}
     \put(-74.5,-10){\makebox(10,10)[c]{\emph{original order}}}
   \end{picture} \quad
   \includegraphics[scale=0.38]{reorder8}
   \begin{picture}(0,0)
     \put(-75.5,114){\makebox(10,10)[c]{1}}
     \put(-39,99){\makebox(10,10)[c]{3}}
     \put(-23.5,62){\makebox(10,10)[c]{5}}
     \put(-39,25){\makebox(10,10)[c]{7}}
     \put(-75.5,10){\makebox(10,10)[c]{8}}
     \put(-112,25){\makebox(10,10)[c]{6}}
     \put(-127.5,62){\makebox(10,10)[c]{4}}
     \put(-112,99){\makebox(10,10)[c]{2}}
     \put(-74.5,-10){\makebox(10,10)[c]{\emph{breadth-first order}}}
   \end{picture} \quad
\end{center}
\vspace*{7pt}
\caption{\label{fig:reorder8}
The vertex reordering scheme for $K=8$.}
\vspace*{-5pt}
\end{figure}

\begin{figure}
\begin{center}
   \includegraphics[scale=0.33]{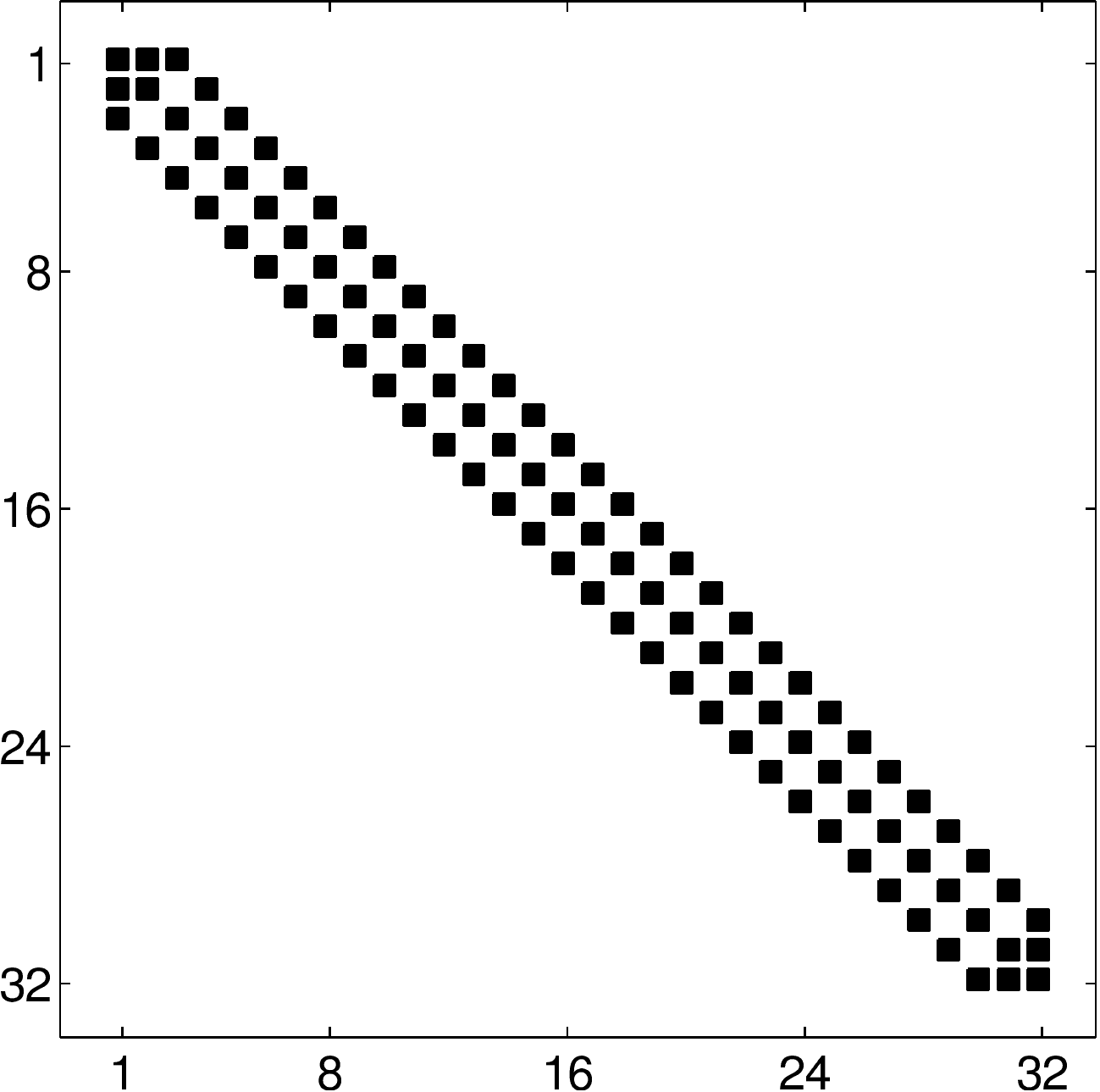} \qquad
   \includegraphics[scale=0.33]{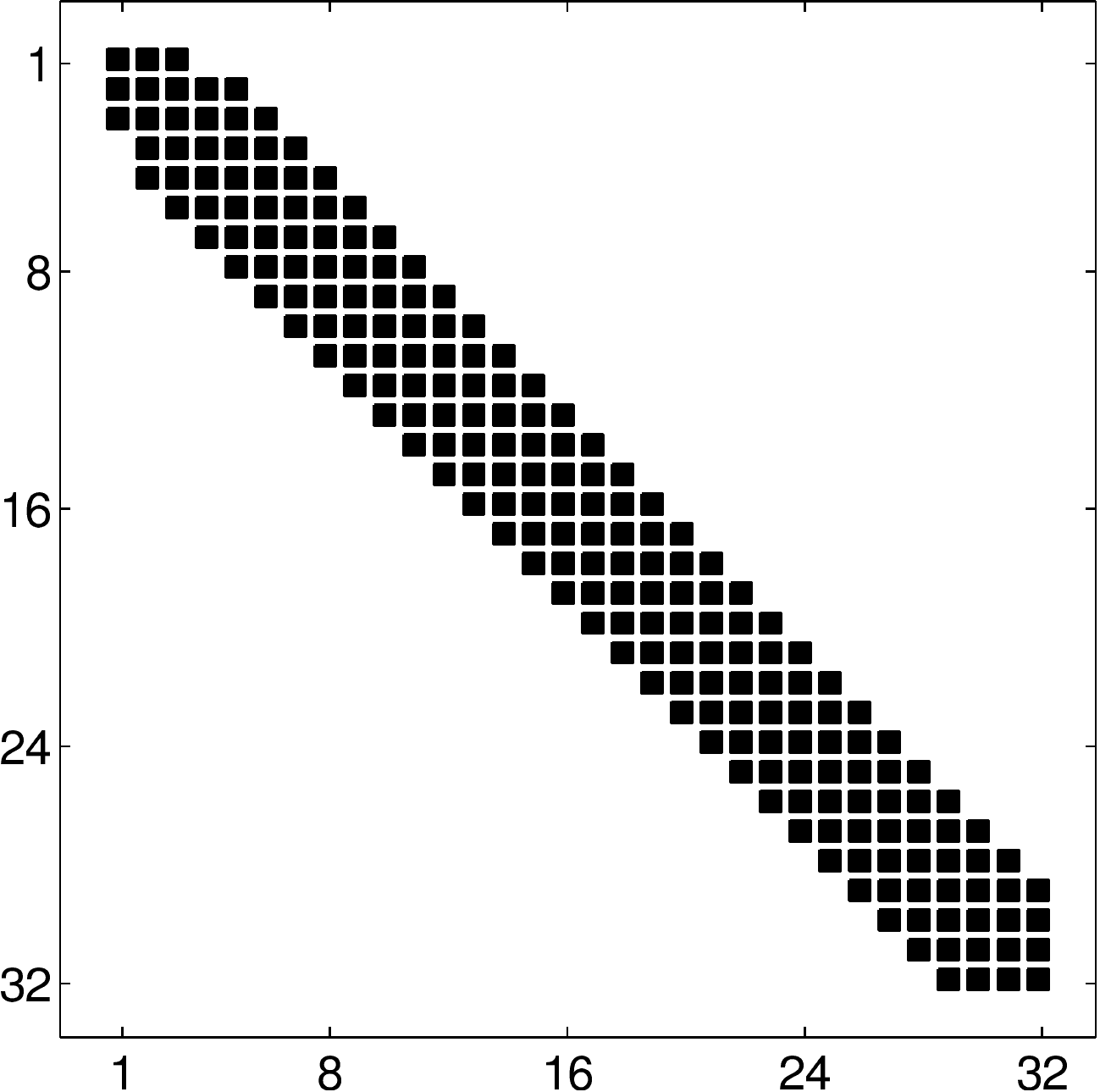}
\end{center}

\vspace*{-5pt}
\caption{\label{fig:spyplotsB}
Analogue of Figure~\ref{fig:spyplotsA} for the pentadiagonal matrix
from the breadth-first ordering (left) and the nonzeros that arise in
reduction of this matrix to tridiagonal form using the bulge-chasing approach (right).}
\vspace*{-5pt}
\end{figure}

One can compute a banded tridiagonalization
using the LAPACK software library's {\tt dsbtrd} routine,
or compute the eigenvalues directly with the banded eigensolver {\tt dsbev}~\cite{And99}.
(If $\BP\BJ_\pm\BP^*$ stored in sparse format, MATLAB's {\tt eig} command will identify 
the band form and find the eigenvalues in $O(K^2)$ time.)
We have benchmarked our computations in LAPACK with standard double precision
arithmetic and a variant compiled for quadruple precision.%
\footnote{\url{http://icl.cs.utk.edu/lapack-forum/viewtopic.php?f=2&t=2739}
gives compilation instructions.}
When $a_n=1$ and $b_n=0$ for all $n$, the eigenvalues of $\BJ_\pm$ 
are known in closed form~\cite{Gea69}.\ \ 
With the reordering scheme described above, LAPACK returns the
correct eigenvalues, up to the order of machine precision (roughly 
$10^{-16}$ for double precision and $10^{-34}$ for quadruple precision).
Figure~\ref{fig:time} compares the timing of the reordered scheme to
the traditional dense matrix approach.  In both double and
quadruple precision, the $O(K^2)$ performance of the reordered approach
offers a significant advantage.
(These timings were performed on a desktop with a 3.30~GHz Intel Xeon E31245 processor,
applied to the Fibonacci model described in the next section.
The numbers in the legend indicate the slope of a linear fit of the last five data points,
and each algorithm is averaged over the four Fibonacci parameters $\lambda = 1, 2, 3, 4$.)

\begin{figure}
\includegraphics[scale=0.6]{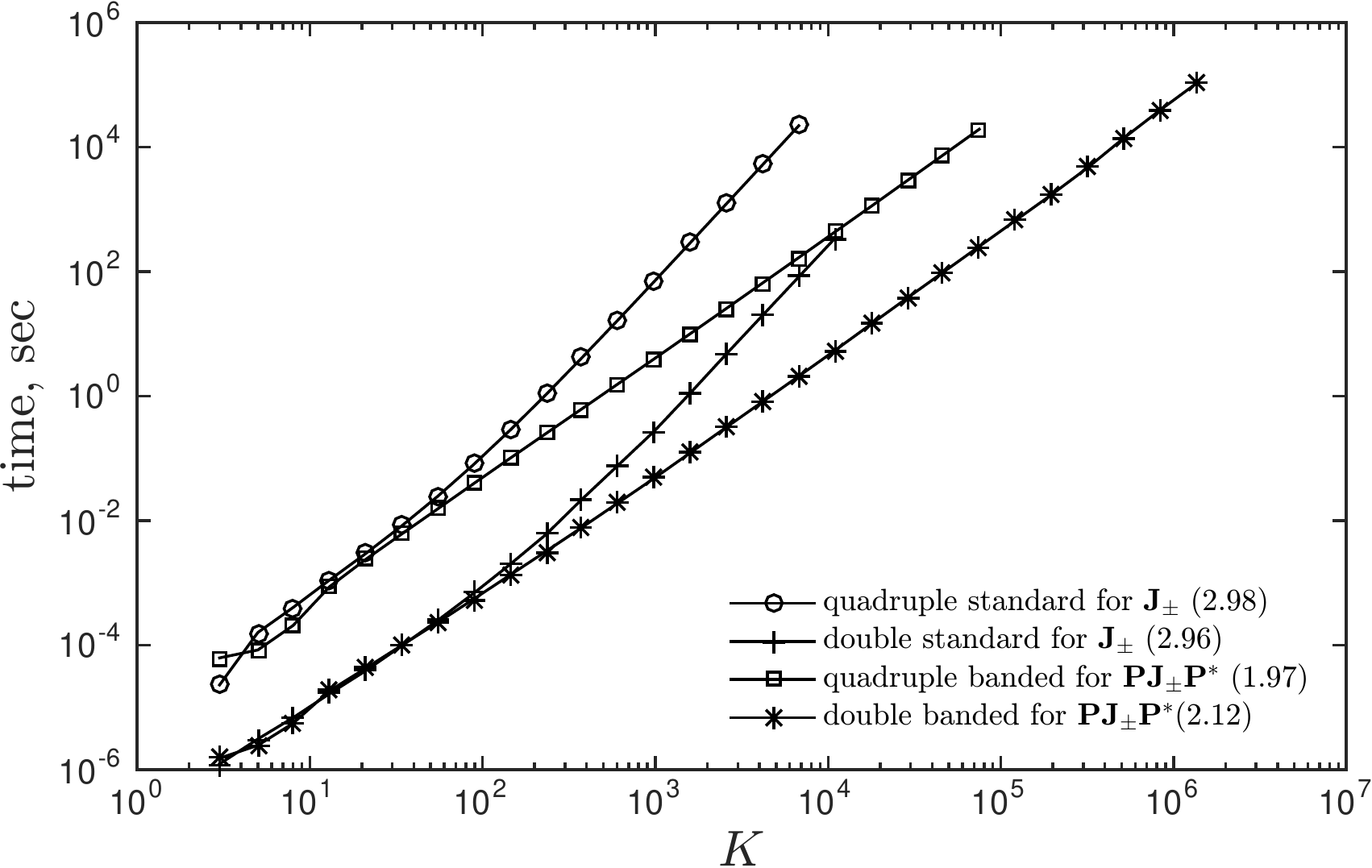}

\vspace*{-5pt}
\caption{\label{fig:time}
Performance of LAPACK's standard ({\tt dsyev}) and banded ({\tt dsbev}) 
symmetric eigensolvers in double and quadruple precision, 
applied to the Fibonacci model.}
\end{figure}
\section{Spectral theory for quasiperiodic Schr\"odinger operators} \label{sec:quasi} \label{ssec:subshfitspecth}

We focus on Jacobi operators that are \emph{discrete Schr\"odinger operators}, that is, those operators for which the off-diagonal terms satisfy $a_n =1$ for all $n$, and the \emph{potential} $\{b_n\}$ varies in a deterministic but non-periodic fashion.\footnote{In \emph{off-diagonal} models, $b_n = 0$ for all $n$, while the $a_n$ coefficients vary aperiodically, see, e.g., \cite{Mar12,Yes11}.} For several prominent examples the spectrum is a Cantor set; in other cases even a gross description of the spectral type has been elusive.

Periodic approximations lead to elegant covers of the spectra of the quasiperiodic operators,%
\footnote{Except for the Almost Mathieu operator, the potentials we describe are not quasiperiodic in the classical sense of being almost periodic sequences with finitely generated frequency modules.  However, the terminology is completely standard at this point.}
and while the arguments that produce these covers are now standard, there is not such a direct venue in the literature where this framework is quickly summarized.  Consequently, we recapitulate the essential arguments for the reader unfamiliar with this landscape; those seeking greater detail can consult the more extensive survey~\cite{DamSubshift}.\ \  In Section~\ref{sec:exp} we shall apply our algorithm to numerically compute the spectral covers described here.

A variety of quasiperiodic potentials have been investigated
in the mathematical physics literature; for a survey, see, e.g., \cite{DF}.  
Most prominent is the \emph{almost Mathieu operator}~\cite{Har55, Tho83}, with potential
\begin{equation} \label{eq:am}
b_n = 2 \lambda \cos(2\pi(n \alpha + \theta))
\end{equation}
for irrational $\alpha$, nonzero \emph{coupling constant} $\lambda$, and  \emph{phase} $\theta \in \R $.  The spectrum of the almost Mathieu operator is a Cantor set for all irrational  $\alpha$, every $\theta$, and every $\lambda \neq 0$~\cite{AvilaJito09}.\ \  Moreover, the Lebesgue measure of the spectrum is precisely $4\big|1-|\lambda|\big|$ whenever $\alpha$ is irrational \cite[Theorem~1.5]{AvilaKri06}.  Of particular interest is the Hausdorff dimension of the spectrum in the critical case $\lambda = 1$, about which very little is known; see \cite{Last94} for some partial results.

\emph{Sturmian potentials} take the form
\begin{equation} \label{eq:sturm}
 b_n = \lambda \chi_{[1-\alpha, 1)} ((n \alpha\ + \theta)  \, {\rm mod}\ 1 ),
\end{equation}
where $\chi_S$ is the indicator function on the set $S\subset \R$ 
and $\alpha$, $\lambda$, and $\theta$ play the same role as in~(\ref{eq:am}).  For such potentials the spectrum is a Cantor set of zero Lebesgue measure for all irrational $\alpha$, nonzero $\lambda $ and phases $\theta$~\cite{BIST89}.  It is conjectured that for a fixed nonzero coupling $\lambda$, the Hausdroff dimension of the spectrum is constant on a set of $\alpha$'s of full Lebesgue measure; currently, this is only known for $\lambda \geq 24$ \cite[Theorem~1.2]{DG14Hdim}.

\subsection{Quasiperiodic operators from substitution rules}

Alternatively, aperiodic potentials can be constructed from primitive substitution rules.  
Unlike the almost Mathieu and Sturmian cases, the spectral type of the operators is not well-understood.  Having spectrum of zero Lebesgue measure precludes the presence of absolutely continuous spectrum, a result known well before Damanik and Lenz proved zero-measure spectrum; see the main result of \cite{Kotani89RMP}. So far, numerous partial results exclude eigenvalues for particular substitution operators~\cite{BIST89,  Dam2000LMP, Dam2001AHP, DamLenz99CMP, HKS95}, and no results yet establish the existence of eigenvalues for any (two-sided) substitution operator.
The construction is slightly more involved than previous examples, but since seeking a better understanding of substitution potentials is a motivation of this work, we describe these objects in detail.

Let $\A \subset \R $ be a finite set, called the \emph{alphabet}.\footnote{We do not necessarily have to restrict our attention to alphabets consisting of real numbers, but this makes the definition of our subshift potentials in \eqref{def:subshiftpotential} somewhat simpler.} A \emph{substitution} on $\A$ is a rule that replaces elements of $\A$ by finite-length words over $\A$.\ \ For example, the \emph{period doubling} substitution, $S_{\mathrm{PD}}$, is defined by the rules $\a \mapsto \a\b$ and $\b \mapsto \a\a$ over the two-letter alphabet $ \A = \{\a,\b\}$~\cite{BBG91}.  
The \emph{Thue--Morse} substitution uses the rules $\a \mapsto \a\b$ and $\b \mapsto \b\a$~\cite{Bel90, DelPey91}.  The \emph{Fibonacci} potential~\cite{KKT83, OPRSS83} can be viewed as a Sturmian potential with $\alpha = (\sqrt{5}-1)/2$ and $\theta = 0$, or as a primitive substitution potential with rules $\a \mapsto \a\b$, $\b \mapsto \a$. (The equivalence of these definitions follows from \cite[Lemma 1b]{BIST89}.)  One can also study substitutive sequences on larger alphabets; for example, the \emph{Rudin--Shapiro} substitution is defined on the four-symbol alphabet $\A = \{\a,\b,\c,\d\}$ by the rules $\a \mapsto \a\b $, $ \b \mapsto \a\c $, $ \c \mapsto \d\b $, and $\d \mapsto \d\c$~\cite{Rud59, Shap51}.

Each of the aforementioned examples enjoys a property known as primitivity, which can be described informally as the existence of an iterate which maps each letter to a word containing the full alphabet.  More precisely, we say that a substitution $S$ is \emph{primitive} if there exists some $k \in \Z_+$ so that for every $\a,\b \in \A$, $\b$ is a subword of $S^k(\a)$. 

We now describe how a primitive substitution rule leads to a quasiperiodic Schr\"odinger operator,
focusing on period doubling as a concrete example.
To obtain an aperiodic sequence, start with the symbol $\a$ and form the sequence $w_k = S_{\mathrm{PD}}^k(\a)$ by iteratively applying the period doubling substitution rules, $\a \mapsto \a\b$ and $\b \mapsto \a\a$. The result is a sequence of finite words: $w_0 = \a$, $w_1 = \a\b$, $w_2 = \a\b\a\a$, $w_3 = \a\b\a\a\a\b\a\b$, and so on.  Notice that  $w_k$ is always a prefix of $w_{k+1}$.  In particular, there is a well-defined limiting sequence,
\begin{equation} \label{eq:subword}
x_{\mathrm{PD}} 
= \lim_{k \to \infty} S_{\rm PD}^k(\a)
= \a\b\a\a\a\b\a\b\a\b\a\a\a\b\a\a \ldots,
\end{equation}
with the property that $x_{\mathrm{PD}}$ is fixed by the period doubling substitution; sequences with this property are called \emph{substitution words} for the substitution.   

From a substitution word $x_{\rm PD}$ one can construct a quasiperiodic potential for a discrete Schr\"odinger operator.  Given a coupling constant $\lambda$, take
\begin{equation}\label{eq:subshiftpotdef}
\a = \lambda; \qquad \b = 0; \qquad b_n = \mbox{$n$th symbol of $x_{\rm PD}$}.
\end{equation}
One subtlety remains: the substitution word $x_{\mathrm{PD}}$ is a one-sided sequence, while our potentials are two-sided, i.e., we should specify $b_n$ for all $n\in\Z$.  To generate two-sided potentials, one considers an accumulation point of left-shifts of $x_{\mathrm{PD}}$; equivalently, one considers a two-sided sequence with the same local factor structure as $x_{\mathrm{PD}}$. The details of this construction follow.

 Suppose $S$ is a primitive substitution on the finite alphabet $\A \subset \R $ and $x \in \A^{\N} $ is some substitution word thereof, i.e., $S(x) = x$.  The \emph{subshift} generated by $S$ is the set of all two-sided sequences with the same local factor structure as $x$.  More precisely,
\begin{equation}\label{eq:subshiftdef}
\Omega_S
= \left\{ \omega \in \A^{\Z} : \omega_n \cdots \omega_m \text{ is a subword of } x \text{ for every } n \leq m \right\}.
\end{equation}
Using primitivity of $S$, one can check that this definition of $\Omega_S$ does not depend upon the choice of substitution word.  One can also generate the set $\Omega_S$ via the following dynamical procedure.  First, endow the sequence space $\A^{\Z}$ with some metric that induces the product topology thereupon, e.g.,
\begin{equation}\label{def:subshift:metric}
d(\omega,\omega')
=
\sum_{n \in \Z} \frac{1-\delta_{\omega_n,\omega_n'}}{2^{|n|+1}},
\end{equation}
where $\delta_{a,b}$ denotes the usual Kronecker delta symbol.  The sequences $\omega$ and $\omega'$ are close with respect to the metric in \eqref{def:subshift:metric} if and only if they agree on a large window centered at the origin, so this metric does indeed induce the topology of pointwise convergence on $\Omega_S$.  One can check that $\Omega_S$ defined by \eqref{eq:subshiftdef} coincides precisely with the set of limits of convergent subsequences of the sequence $ (T^nx)_{n=1}^\infty $, where $T$ denotes the usual left shift $ (T\omega)_n = \omega_{n+1}$. (There is a minor technicality here: since $x$ is a one-sided sequence $(T^n x)_k$ is only well-defined for $n > |k|$ when $k$ is negative.)

Primitivity of $S$ implies that the topological dynamical system $(\Omega_S,T)$ is strictly ergodic \cite[Proposition~5.2 and Theorem~5.6]{queff}. For each $\omega \in \Omega_S$, one obtains a Schr\"odinger operator $H_\omega$ on $\ell^2(\Z)$ defined via
\begin{equation} \label{def:subshiftpotential}
(H_\omega \psi)_n = \psi_{n-1} + \omega_n \psi_n + \psi_{n+1}
\end{equation}
for each $n \in \Z$.\ \ (The right hand side of \eqref{def:subshiftpotential} makes sense, since we chose $\A \subseteq \R$.) Minimality of $(\Omega_S,T)$ implies $\omega$-invariance of the spectrum.

\begin{prop} Given $\Omega_S$ and $H_\omega$ as above, there is a uniform compact set $\Sigma$ with the property that $\sigma(H_\omega) = \Sigma$ for every $\omega \in \Omega_S$.
\end{prop}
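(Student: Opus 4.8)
The plan is to prove the statement in two steps: first establish that the spectrum is constant across the orbit closure, and then conclude that the common value is a compact set. The key structural fact supplied by the text is that $(\Omega_S, T)$ is \emph{minimal} (every orbit is dense; this follows from strict ergodicity, which was asserted above). I would exploit minimality together with the observation that the family $\omega \mapsto H_\omega$ is \emph{covariant}: if $\omega' = T\omega$ denotes the left shift, then a direct computation with the definition \eqref{def:subshiftpotential} shows $H_{T\omega} = U H_\omega U^*$, where $U$ is the unitary shift on $\ell^2(\Z)$ given by $(U\psi)_n = \psi_{n+1}$. Since unitarily equivalent operators have identical spectra, $\sigma(H_{T\omega}) = \sigma(H_\omega)$ for every $\omega$, so the function $\omega \mapsto \sigma(H_\omega)$ is constant along each $T$-orbit.

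Second, I would upgrade orbit-invariance to constancy on all of $\Omega_S$ using density of orbits plus a continuity property of the spectrum. The natural continuity statement is that $\omega \mapsto H_\omega$ is \emph{strongly continuous} (indeed continuous in operator norm, since if $\omega$ and $\omega'$ agree on a large symmetric window, then $H_\omega - H_{\omega'}$ is supported far from the origin and has small norm in an appropriate sense). Strong resolvent continuity implies that the spectrum cannot suddenly expand: the map $\omega \mapsto \sigma(H_\omega)$ is lower semicontinuous in the sense that if $E \in \sigma(H_\omega)$ and $\omega^{(k)} \to \omega$, then $\sigma(H_{\omega^{(k)}})$ eventually meets every neighborhood of $E$. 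Combining this with the fact that the spectra are constant on the dense orbit $\{T^n \omega_0\}$ forces a single set $\Sigma$; one shows $\sigma(H_\omega) \subseteq \Sigma$ and $\Sigma \subseteq \sigma(H_\omega)$ by approximating an arbitrary $\omega$ by orbit points of a fixed base sequence and passing to the limit in both inclusions.

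Compactness of $\Sigma$ is then immediate and essentially free: each $H_\omega$ is a bounded self-adjoint operator, since the entries of the potential lie in the finite (hence bounded) alphabet $\A \subset \R$ and the off-diagonal entries equal $1$, giving a uniform bound $\|H_\omega\| \le 2 + \max_{\a \in \A} |\a|$ independent of $\omega$. Therefore $\sigma(H_\omega)$ is a closed subset of the fixed compact interval $[-(2 + \max |\a|),\, 2 + \max |\a|]$, and being closed and bounded it is compact; the common value $\Sigma$ inherits this.

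The main obstacle I anticipate is making the continuity argument fully rigorous, specifically establishing the correct semicontinuity of the spectrum under the convergence in the metric \eqref{def:subshift:metric}. Pointwise (product-topology) convergence $\omega^{(k)} \to \omega$ gives entrywise convergence of the matrices $H_{\omega^{(k)}} \to H_\omega$, which yields strong resolvent convergence but only one-sided control of the spectrum in general; proving the reverse inclusion $\Sigma \subseteq \sigma(H_\omega)$ requires care, and the cleanest route is to use minimality to realize $\omega$ as a limit of shifts of a \emph{fixed} $\omega_0 \in \Omega_S$ and then transport spectral data via the covariance relation rather than relying on abstract upper semicontinuity (which can fail for the spectrum). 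This is the standard argument for ergodic families, so the technical content is routine once the covariance identity $H_{T\omega} = U H_\omega U^*$ and the strong-convergence bookkeeping are in place.
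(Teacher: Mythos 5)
Your argument is essentially the paper's: minimality supplies shifts $T^{n_k}\omega_0 \to \omega$, shift-covariance makes the spectrum constant along the orbit, the one-sided spectral inclusion for strong (resolvent) convergence gives $\sigma(H_\omega) \subseteq \sigma(H_{\omega_0})$, and symmetry (swapping the roles of $\omega$ and $\omega_0$) gives the reverse inclusion --- exactly the route you settle on in your final paragraph, and the same one the paper takes via \eqref{strong:app}. One parenthetical should be dropped as false: the map $\omega \mapsto H_\omega$ is \emph{not} norm continuous, since $H_\omega - H_{\omega'}$ is a diagonal operator of norm $\sup_n |\omega_n - \omega_n'|$, which remains of order $\mathrm{diam}(\A)$ no matter how large the window of agreement; only strong continuity holds, which is all your argument actually uses.
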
  

\begin{proof}
Given $\omega,\omega' \in \Omega_S$, there is a sequence $\{n_k\} \subset \Z$ with the property that $T^{n_k} \omega \to \omega'$ as $k \to \infty$. (This is a consequence of minimality of $(\Omega_S,T)$, which follows from \cite[Proposition~4.7]{queff}, for example.)  In particular,
$$
H_{\omega'} = \slim_{k\to\infty} H_{T^{n_k}\omega}.
$$
By a standard strong approximation argument (e.g., \cite[Theorem~VIII.24]{RS80}), 
\begin{equation}\label{strong:app}
\sigma(H_{\omega'}) 
\subseteq
\bigcap_{\ell=1}^\infty \overline{\bigcup_{k=\ell}^\infty \sigma(H_{T^{n_k}\omega})}
=
\sigma(H_\omega).
\end{equation}
By symmetry, we can run the previous argument with the roles of $\omega$ and $\omega'$ reversed, so $\sigma(H_\omega) = \sigma(H_{\omega'})$.
\end{proof}

This is the upshot of the previous proposition: if we want to study the spectrum of a two-sided substitutive Schr\"odinger operator \emph{as a set}, then we can work with any member of the associated subshift.

One can avoid the construction needed to generate two-sided potentials by considering instead 
\emph{half-line} Jacobi operators, $\CJ_+:\ell^2(\N) \to \ell^2(\N)$.  These operators are defined entrywise by \eqref{eq0} for $n \in \N$ with the normalization $\psi_0 = 0$ to make $(\CJ_+\psi)_1$ well defined.  If $\CJ_+$ denotes the Jacobi operator defined by $a_n \equiv 1$ and \eqref{eq:subshiftpotdef}, and $\CJ$ denotes a two-sided Jacobi operator generated in one of the two fashions just described, then by \cite[Theorem~7.2.1]{Sim11},
\begin{equation} \label{eq:shiftlimitspec}
\sigma_{\ess}(\CJ_+)
=
\sigma(\CJ),
\end{equation}
where $\sigma_{\ess}(\cdot)$ denotes the essential spectrum.  In particular, $\sigma(\CJ_+)$ and $\sigma(\CJ)$ have the same Hausdorff dimension, since \eqref{eq:shiftlimitspec} implies that they coincide up to a countable set of isolated points.

\subsection{Periodic approximations}

All the classes of quasiperiodic models we have described have natural periodic approximations.  For the almost Mathieu and Sturmian cases: replace the irrational $\alpha$ with a rational approximant; for the substitution rules: pick a starting symbol, generate a string from finitely many applications of the substitution rules, and repeat that string periodically.
We seek high fidelity numerical approximations to these periodic spectra, as a vehicle for understanding properties of quasiperiodic models, such as the fractal dimension of the spectrum.

In the case of substitution rules, we shall describe how periodic approximations lead to an \emph{upper bound} on the spectrum of the quasiperiodic operator.  Again we focus on the period doubling substitution.  Fix $S_{\mathrm{PD}}$, $x_{\mathrm{PD}}$ as before, choose some $\omega \in \Omega_{\mathrm{PD}}$, and put $w_k^\a = S^k(\a)$ and $w_k^\b = S^k(\b)$. We generate periodic Schr\"odinger operators $H_k^\a$ and $H_k^\b$ by repeating the strings $w_k^\a$ and $w_k^\b$ periodically, giving $H_k^\a$ and $H_k^\b$ with coefficients having period $2^k$.  Moreover, we choose $H_k^\a$ and $H_k^\b$ in such a way that
$$
H_\omega
=
\slim_{k \to \infty} H_k^\a
=
\slim_{k \to \infty} H_k^\b.
$$
Define $\Sigma_k^\a \equiv \sigma(H_k^\a)$ and $\Sigma_k^\b \equiv \sigma(H_k^\b)$  for all $k \geq 0 $.  Application of strong approximation, as before, implies
\begin{equation} \label{eq:perstrongapprox}
\Sigma
\subseteq
\bigcap_{n=1}^\infty \overline{\bigcup_{k=n}^\infty \Sigma_k^\a},
\quad
\Sigma
\subseteq
\bigcap_{n=1}^\infty \overline{\bigcup_{k=n}^\infty \Sigma_k^\b}.
\end{equation}
These unions are too large to be computationally tractable, but the hierarchical structure of the periodic approximations saves the day.  In these cases the monodromy matrices~(\ref{eq:monod}) take the forms
\begin{eqnarray*}
M_k^\a(E)
&=& \begin{bmatrix} E - w_k^\a(2^k) & -1 \\ 1 & 0\end{bmatrix}
\cdots
\begin{bmatrix} E - w_k^\a(1) & -1 \\ 1 & 0 \end{bmatrix} \\[.5em]
M_k^\b(E)
&=& \begin{bmatrix} E - w_k^\b(2^k) & -1 \\ 1 & 0 \end{bmatrix}
\cdots
    \begin{bmatrix} E - w_k^\b(1) & -1 \\ 1 & 0\end{bmatrix},
\end{eqnarray*}
where now the subscript $k$ denotes the $k$th iteration of the substitution rule, and hence
a full period of length $K = 2^k$.
Recalling the discriminant condition~(\ref{eq:disc}), define
$$
x_k(E) \equiv \tr(M_k^\a(E)),
\quad
y_k(E) \equiv \tr(M_k^\b(E)).
$$
As in~(\ref{eq:disc}), these functions encode the spectra of the periodic approximants,
in that $ \Sigma_k^\a = \{ E : |x_k(E)| \leq 2 \} $ and $ \Sigma_k^\b = \{ E : |y_k(E)| \leq 2 \} $.  
The  rules of the period doubling substitution imply
\begin{align}
\label{pd:matrec1}
M_{k+1}^\a & = M_k^\b M_k^\a \\
\label{pd:matrec2}
M_{k+1}^\b & = M_k^\a M_k^\a.
\end{align}
Applying the Cayley--Hamilton theorem ($M_{k}^2 - \tr(M_k)M_k+I=0$) to \eqref{pd:matrec1} and \eqref{pd:matrec2} yields
\begin{align}
\label{pd:trace1}
x_{k+1} & = x_k y_k -2 \\
\label{pd:trace2}
y_{k+1} & = x_k^2 -2,
\end{align}
given in~\cite[eq.~(1.9)]{BBG91}.  Now \eqref{pd:trace1} and \eqref{pd:trace2} imply $\Sigma_{k+1}^\a \cup \Sigma_{k+1}^\b \subseteq \Sigma_k^\a \cup \Sigma_k^\b$ for all $k \geq 0$, thus reducing~\eqref{eq:perstrongapprox} to the more tractable
\begin{equation} \label{pd:covers}
\llap{\mbox{\small [Period Doubling]}\hspace*{5em}}
\Sigma
\subseteq
\bigcap_{k=1}^\infty \Sigma_k^\a \cup \Sigma_k^\b.
\end{equation}
The spectrum for the Thue--Morse substitution can be expressed in the same way, except that substitution rule replaces~(\ref{pd:trace1})--(\ref{pd:trace2}) with 
\begin{align}
\label{tm:trace1}
x_{k+1} &= x_{k-1}^2(x_k - 2) + 2 \\
\label{tm:trace2}
y_{k+1} & = x_{k+1}.
\end{align}
Reasoning as in the period doubling case, \eqref{tm:trace1}--\eqref{tm:trace2} implies
\begin{equation}\label{tm:covers}
\llap{\mbox{\small [Thue--Morse]}\hspace*{6em}}
\Sigma
\subseteq
\bigcap_{k=1}^\infty \Sigma_k^\a \cup \Sigma_{k+1}^\a.
\end{equation}
Notice that \eqref{pd:covers} and \eqref{tm:covers} are not identical.  In general, the covers of the spectrum obtained via this approach depend quite strongly on the chosen substitution.  Note, however, that the same formula \eqref{tm:covers} holds for the Fibonacci substitution. The recursive relationships in \eqref{pd:trace1}--\eqref{pd:trace2} and \eqref{tm:trace1}--\eqref{tm:trace2} are known as the \emph{trace maps} for the period doubling and Thue--Morse potentials. Similar trace maps can be constructed for arbitrary substitutions; see \cite[Theorem~1]{AvBer93} and \cite[Theorem~1]{ABG94} for their description.
\begin{figure}[t!]
\begin{center}
\includegraphics[scale=0.45]{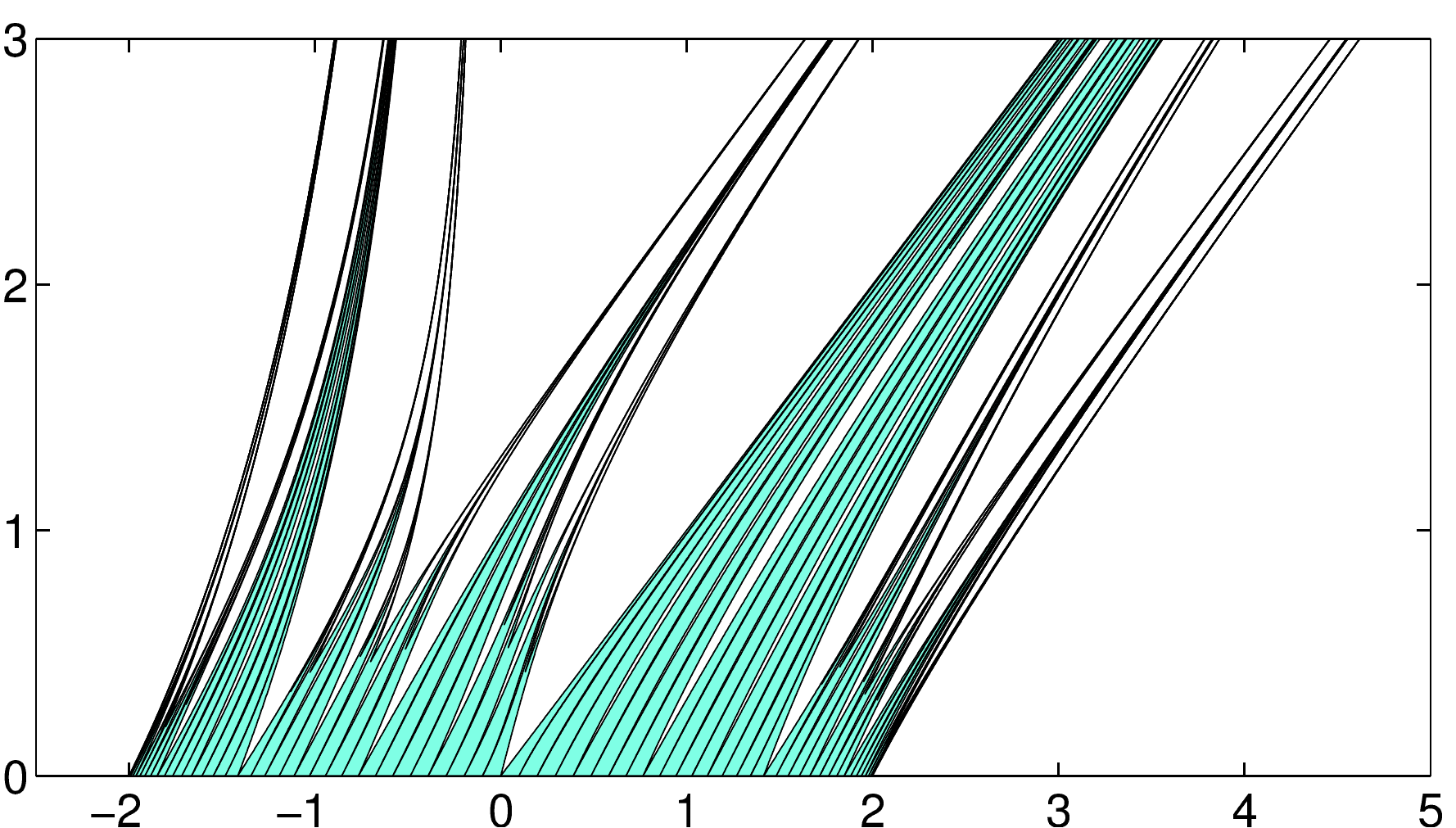}
\begin{picture}(0,0)
\put(-37,23){\small period}
\put(-46,13){\small doubling}
\put(-130,-9){\small $\Sigma_k^\a \cup \Sigma_k^\b$}
\put(-245,65){\small $\lambda$}
\end{picture}
\\[1.5em]
\includegraphics[scale=0.45]{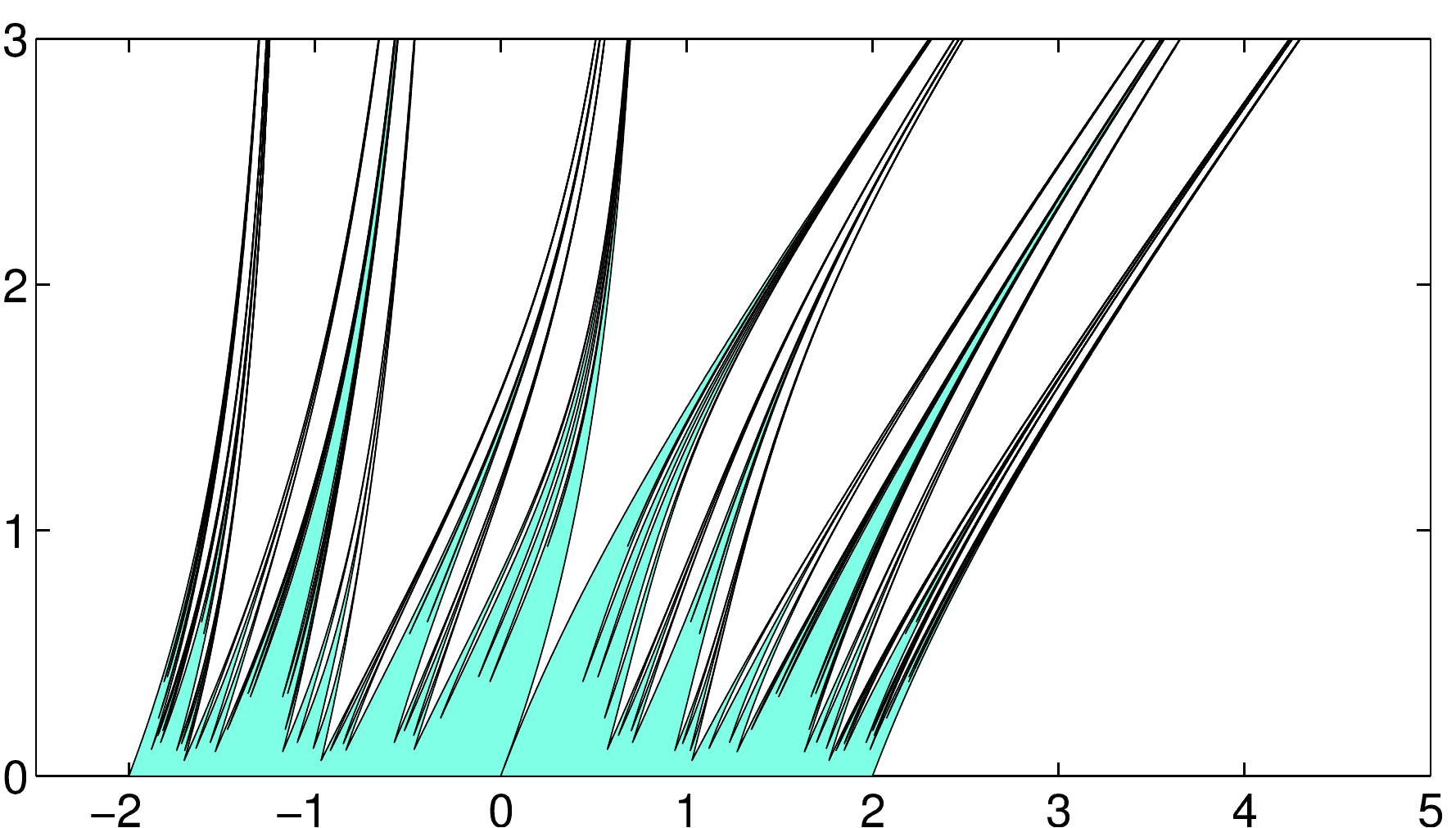}
\begin{picture}(0,0)
\put(-60,13){\small Thue--Morse}
\put(-132,-9){\small $\Sigma_k^\a \cup \Sigma_{k+1}^\a$}
\put(-245,65){\small $\lambda$}
\end{picture}
\end{center}

\caption{\label{fig:follicle}
Spectral covers for the period doubling ($\Sigma_k^\a \cup \Sigma_k^\b$) 
and Thue--Morse ($\Sigma_k^\a \cup \Sigma_{k+1}^\a$) potentials as a 
function of the coupling constant $\lambda$, for $k=7$.
}
\end{figure}

Figure~\ref{fig:follicle} depicts the covers $\Sigma_k^\a \cup \Sigma_k^\b$ 
for the period doubling potential and $\Sigma_k^\a \cup \Sigma_{k+1}^\a$ for
Thue--Morse as $\lambda$ increases from zero with $k=7$, 
giving an impression of how rapidly the covering intervals shrink 
as $\lambda$ increases.  Notice the quite different nature of the 
spectra obtained from these two substitution rules.
Similar figures for the Fibonacci spectrum are shown in~\cite{DEG12}.
(The algorithm described in the last section enables such computations for much larger
values of $k$, but the resulting figures become increasingly difficult to
render due to the number and narrow width of the covering intervals.)

\section{Numerical calculation of periodic covers} \label{sec:exp}

In this section, we apply the algorithm described in Section~\ref{sec:alg} to 
compute the spectral covers for quasiperiodic operators from substitution rules
described in Section~\ref{sec:quasi}.  We begin with some calculations that emphasize the need for higher precision arithmetic to resolve the spectrum, then estimate various spectral quantities for these quasiperiodic operators.

\subsection{Necessity for extended precision}

High fidelity spectral approximations for quasiperiodic Schr\"odinger operators
are tricky to compute due not only to the complexity of the eigenvalue problem,
but also considerations of numerical precision.  The LAPACK symmetric eigenvalue
algorithms are expected to compute eigenvalues of the $K\times K$ matrix $\BJ_\pm$ 
accurate to within $p(K) \|\BJ_\pm\| \eps_{\rm mach}$~\cite[p.~104]{And99}, 
where $p(K)$ is a ``modestly growing function of $K$'' and $\eps_{\rm mach}$ 
denotes the machine epsilon value for the floating point arithmetic system
(on the order of $10^{-16}$ for double precision and $10^{-34}$ for quadruple precision~\cite{IEEE}).
Figure~\ref{fig:intlencompare} shows the width of the smallest interval in the
approximation $\Sigma_k^\a$ for the period doubling and Thue--Morse potentials,
as computed in double and quadruple precision arithmetic for various values of~$\lambda$. 
As $\lambda$ increases, this smallest interval shrinks ever quicker.
Comparing double and quadruple precision values, one sees that 
for even moderate values of $K=2^k$, double precision is unable to 
accurately resolve the spectrum.%
\footnote{The calculations of fractal dimension and gaps that follow were performed
in quadruple precision arithmetic, and we generally restricted the values of $\lambda$ and $k$ 
to obtain numerically reliable results.  In the event the numerical results 
violate the ordering of eigenvalues of $\BJ_+$ and $\BJ_-$ in~(\ref{eq:interleave}),
any offending interval is replaced by one of width $20\kern1pt\eps_{\rm mach}$ centered at 
the midpoint of the computed eigenvalues.}
(Similar numerical errors will be observed for the original
matrix $\BJ_\pm$.  Since it only permutes matrix entries,
our algorithm does not introduce any new instabilities.)

\begin{figure}
  \includegraphics[scale=0.60]{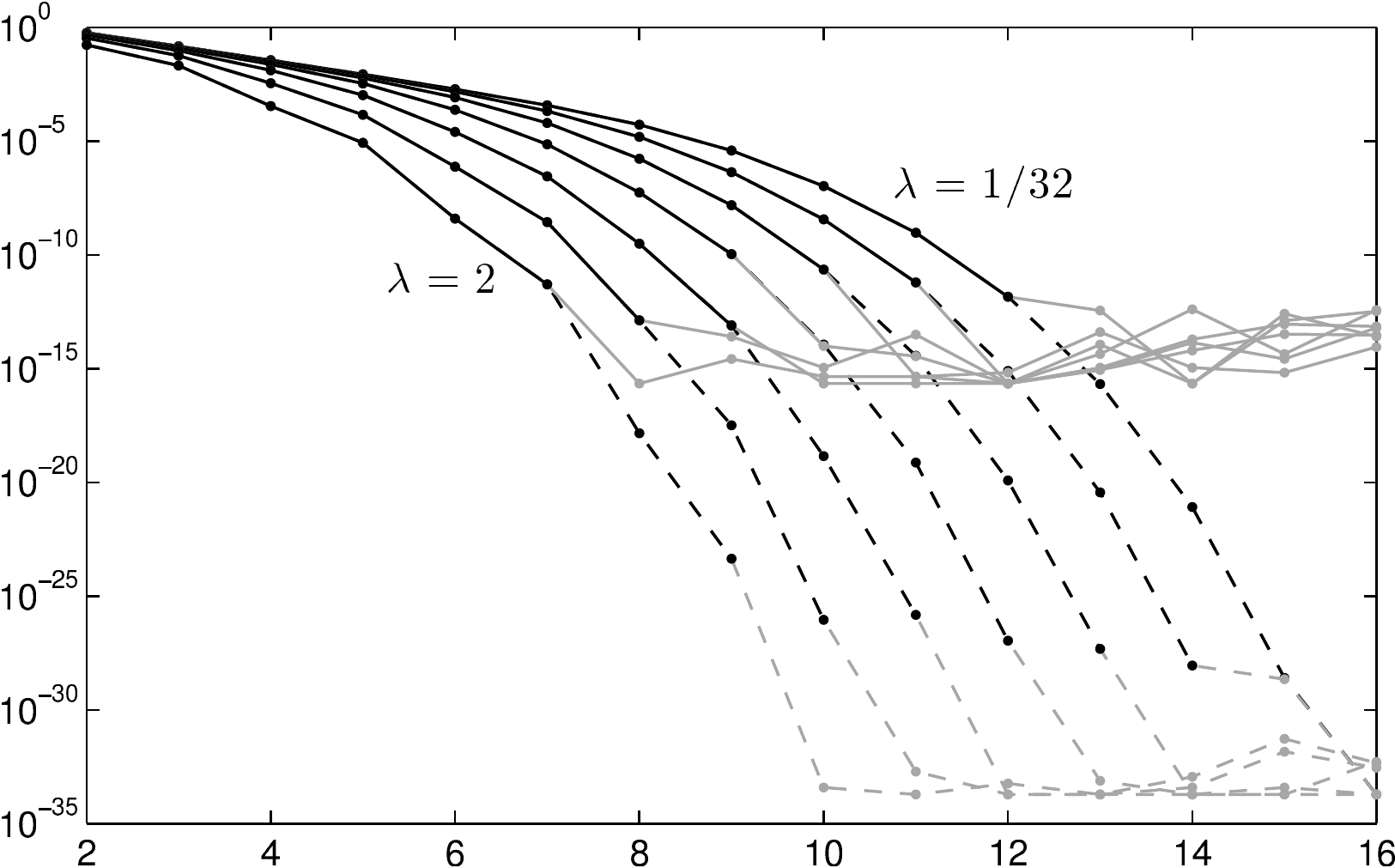}
\begin{picture}(0,0)
\put(-140,-8){\small $k$}
\put(-260,27){\small period}
\put(-260,17){\small doubling}
\put(-300,27){\rotatebox{90}{\small width of smallest interval in $\Sigma_k^\a$}}
\end{picture}

\vspace*{1.5em}
  \includegraphics[scale=0.60]{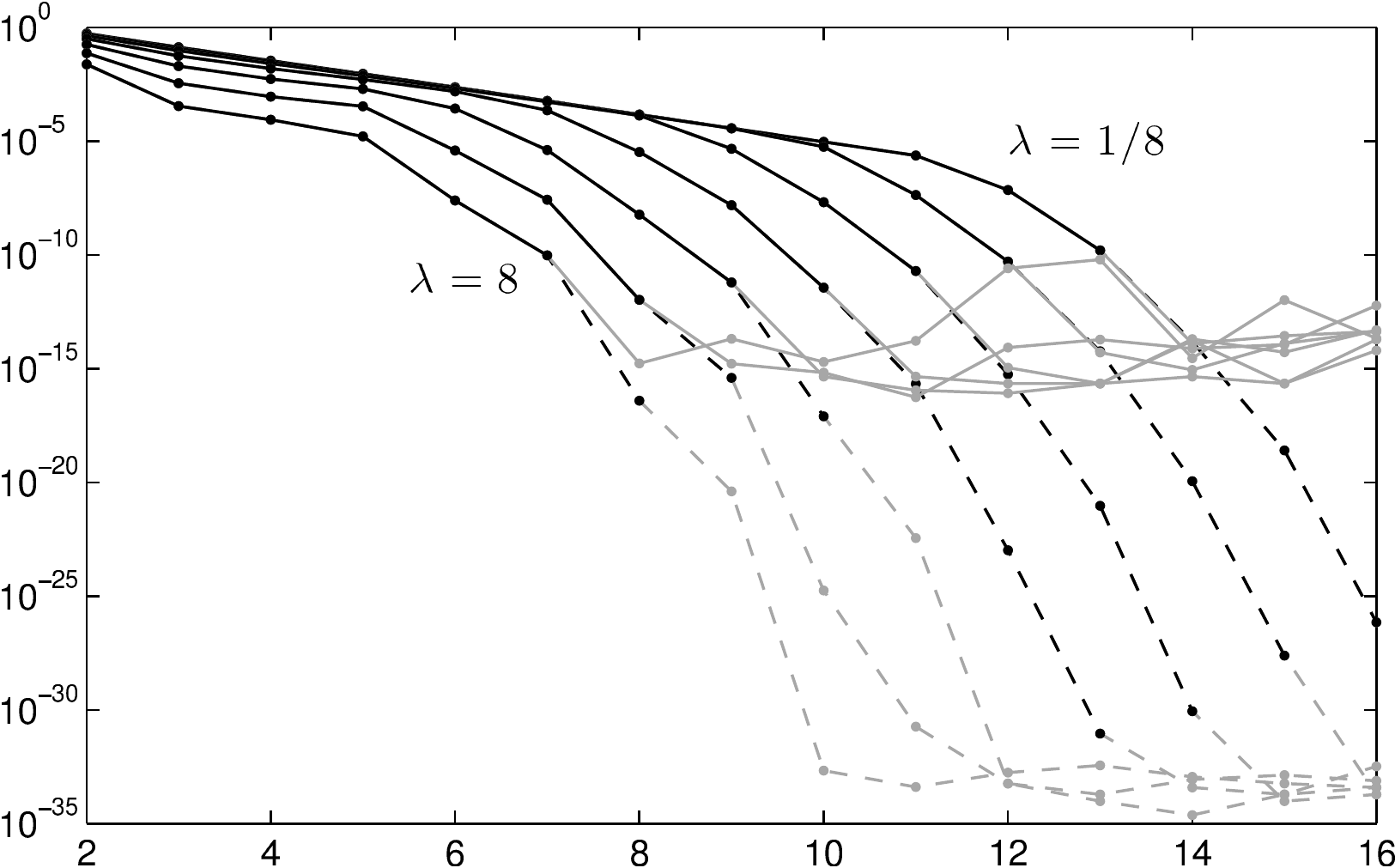}
\begin{picture}(0,0)
\put(-140,-8){\small $k$}
\put(-260,17){\small Thue--Morse}
\put(-300,27){\rotatebox{90}{\small width of smallest interval in $\Sigma_k^\a$}}
\end{picture}

\vspace*{2pt}
  \caption{\label{fig:intlencompare}
  Minimum interval length in $\Sigma_k^\a$ for a range of~$\lambda$ values (powers of two) for the period doubling and Thue--Morse potentials.
The solid line shows double precision, the dashed line quadruple precision.  
Black data points are believed to be correct to plotting accuracy; gray ones are not.}
\end{figure}

\subsection{Approximating fractal dimensions}

The quasiperiodic spectrum $\Sigma$ is known to be a Cantor set for 
the Fibonacci, period doubling, and Thue--Morse potentials for
all $\lambda>0$, 
suggesting calculation of the fractal dimensions of these sets
as functions of $\lambda$.
We begin with two standard definitions; see, e.g., \cite{Fal03,PT93a}.

\begin{defn}
  Given $A\subset \R$ and some $\alpha\in[0,1]$, let
  \[
  h^\alpha(A) \equiv \lim_{\Delta \rightarrow 0} \inf_{\text{$\Delta$-covers}} \sum_{m \geq 1} |B_m|^\alpha,
  \]
 where a $\Delta$-cover of $A$ is defined to be collection $\{B_m\}_{m\ge 1}$ of intervals such that $A\subset \bigcup_{m\ge 1} B_m$ and $|B_m|<\Delta$ for each $m$.
The \emph{Hausdorff dimension} of $A$ is then
  \[
  \dim_H (A) \equiv \inf \{\alpha : h^\alpha(A) < \infty \}.
  \]
\end{defn}

\begin{defn}
   Given $A\subset \R$, let $N_A(\eps)$ denote the number of intervals of the form
   $[j\eps, (j+1)\eps)$, $j\in\Z$, that have a nontrivial intersection with $A$.
   The upper and lower box counting dimensions of $A$ are 
\[ \bcd^\pm(A) \equiv {\lim \kern-2pt {\begin{array}{c} \sup \\[-2pt] \inf\end{array}}}_{\hspace*{-30pt}\raisebox{-4pt}{$\scriptstyle{\eps \to 0}$}} \hspace*{20pt} {\log N_A(\eps) \over \log 1/\eps};
\]
when these agree, the result is the \emph{box-counting dimension} of $A$, $\bcd(A)$.
\end{defn}

The analysis in the last section provides natural covering sets for 
$\Sigma$, which we denote as $C_k$  (i.e., for period doubling,
$C_{k} \equiv \Sigma_k^\a \cup \Sigma_k^\b$; for Fibonacci and Thue--Morse, 
$C_{k} \equiv \Sigma_k^\a \cup \Sigma_{k+1}^\a$).
We estimate $\dim_H(\Sigma)$ using a heuristic proposed by Halsey et al.~\cite{HJKPS86}.
Enumerate the $n_k$ intervals in $C_k$ as $\{B_{k, m}\}$:
\[
C_{k} = \bigcup_{m=1}^{n_k} B_{k, m}.
\]
\begin{algorithm}[h!]
\title{\textbf{Approximating the Hausdorff dimension}}
\label{alg:HD}
\begin{algorithmic}[1]
\State {\bf Input}: two consecutive covers $C_{k, \lambda}$, $C_{k + 1, \lambda}$ of the Cantor spectrum.
\State Construct the function $f(\alpha) \equiv  \sum_{m = 1}^{n_k} |B_{k,m}|^\alpha - \sum_{m = 1}^{n_{k+1}} |B_{k+1,m}|^\alpha$.
\State Compute root $\alpha_k$ of $f(\alpha)$ in the interval $[0,1]$.
\State {\bf Output}: $\alpha_k$ as the approximation to the dimension.
\end{algorithmic}
\end{algorithm}

For the Fibonacci case,
Damanik et al.~\cite{DEGT08} proved upper and lower bounds on $\dim_H(\Sigma)$ for $\lambda\ge 8$ in terms of the functions $S_u(\lambda) = 2\lambda + 22$ and $S_l(\lambda) = \frac{1}{2}\big( \lambda-4 + \sqrt{(\lambda-4)^2 - 12} \big)$.  
To benchmark our method,  we compare the approximate dimension with the upper and lower bounds, 
as shown in Figure~\ref{fig:dimHfib}, indeed obtaining satisfactory results. 
Figure~\ref{fig:dimHpd} shows estimates to $\dim_H(\Sigma)$ drawn from spectral
covers for $k=6,\ldots, 10$, 
with good convergence in $k$ in the small $\lambda$ regime.
The narrow covering intervals for large $\lambda$  and $k$ 
pose a significant numerical challenge, even in quadruple precision arithmetic.

\begin{figure}[t!]
  \includegraphics[scale=0.45]{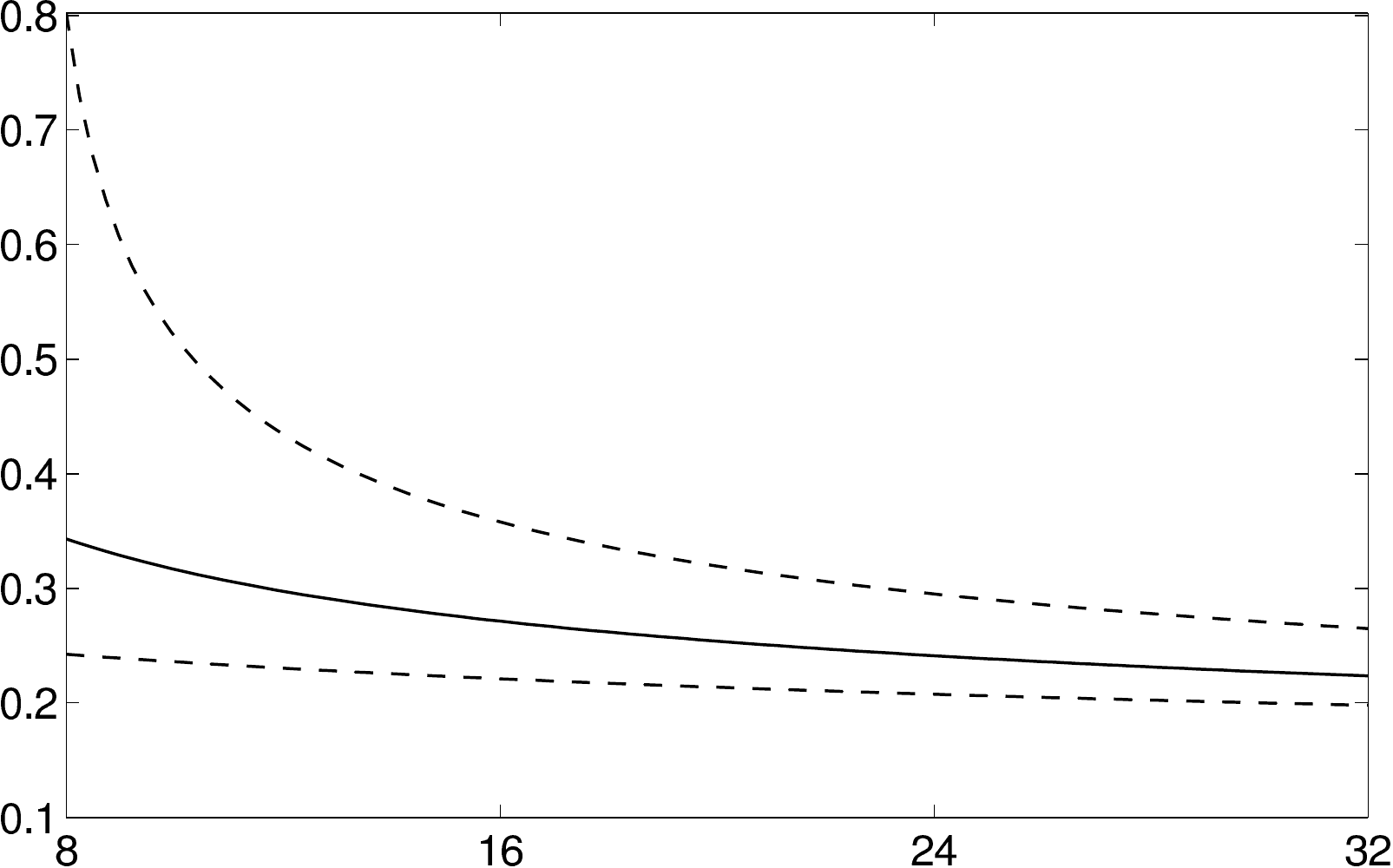}
\begin{picture}(0,0)
\put(-105,-6){\small $\lambda$}
\put(-153,62){\small ${\log(1+\sqrt{2}) \over \log S_l(\lambda)}$}
\put(-190,17){\small ${\log(1+\sqrt{2}) \over \log S_u(\lambda)}$}
\put(-185,48){\small \rotatebox{-10}{$\dim_H(\Sigma)$}}
\end{picture}
  \caption{ \label{fig:dimHfib}
Estimates of $\dim_H(\Sigma)$ for the Fibonacci operator (solid line) computed from $\Sigma_{k}^\a \cup \Sigma_{k+1}^\a$ ($k = 15,16$), obeying the upper and lower bounds from~\cite{DEGT08}.}
\end{figure}

\begin{figure}[t!]
  \includegraphics[scale=0.45]{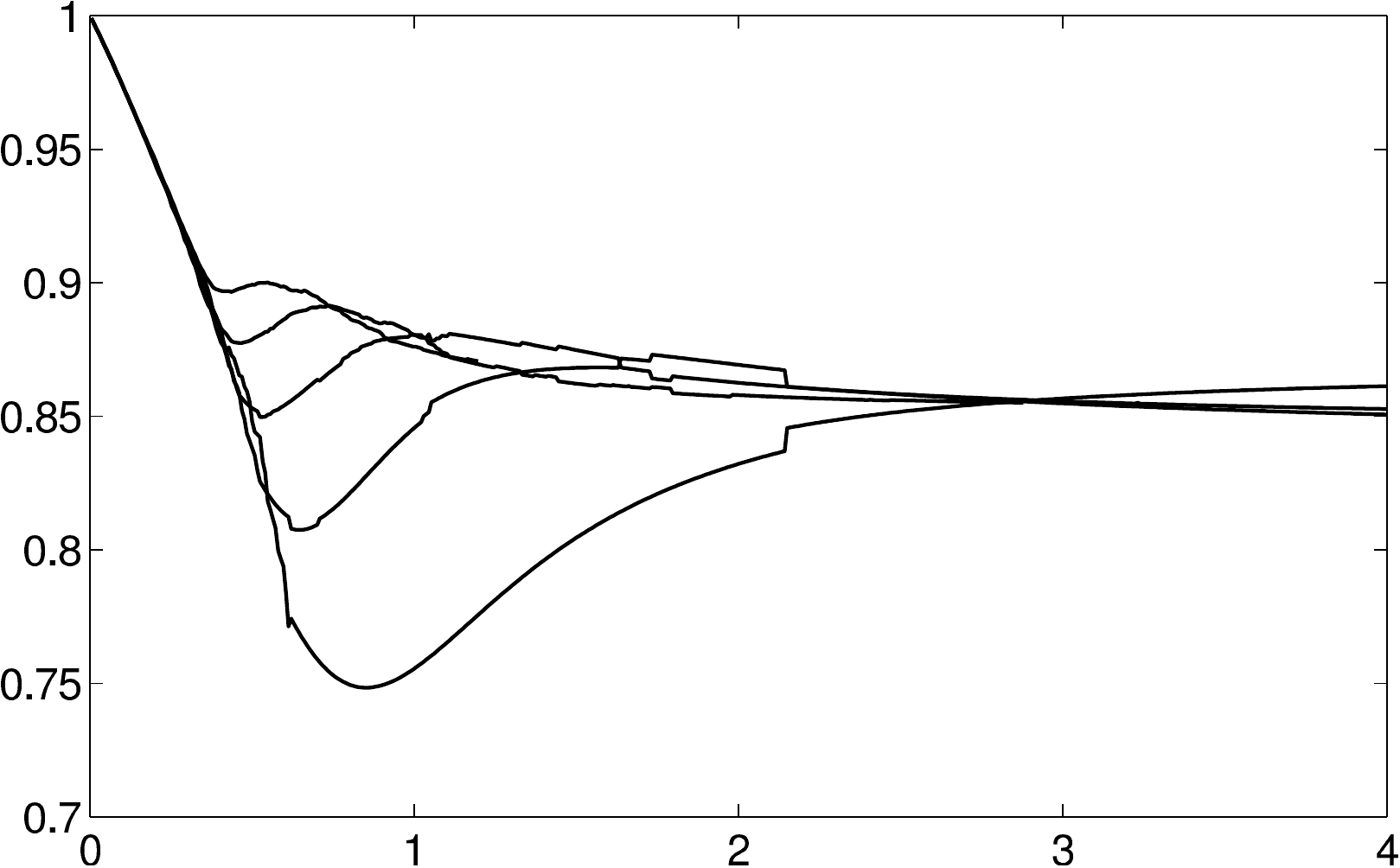}
\begin{picture}(0,0)
\put(-105,-8){\small $\lambda$}
\put(-150,26){\small \rotatebox{40}{$\scriptstyle{k=6}$}}
\put(-160,52){\small \rotatebox{45}{$\scriptstyle{k=7}$}}
\put(-170,65){\small \rotatebox{40}{$\scriptstyle{k=8}$}}
\put(-178,73.5){\small \rotatebox{30}{$\scriptstyle{k=9}$}}
\put(-182,90){\small \rotatebox{0}{$\scriptstyle{k=10}$}}
\end{picture}
  \caption{\label{fig:dimHpd} 
Estimates of $\dim_H(\Sigma)$ for the period-doubling operator using consecutive 
covers $\Sigma_{k}^\a \cup \Sigma_{k}^\b$ and $\Sigma_{k+1}^\a \cup \Sigma_{k+1}^\b$, for
$k=6,\ldots, 10$.  (The $k=9$ and $k=10$ plots do not show the full range of $\lambda$
values, due to the numerical challenge of working with large $k$ and $\lambda$.)}
\end{figure}

The bounds from~\cite{DEGT08} imply $\dim_H(\Sigma) \to 0$ 
like $\log(1+\sqrt{2})/\log(\lambda)$ as $\lambda\to\infty$ for the Fibonacci case.
In contrast, Liu and Qu~\cite{LiuQu} recently showed that for the Thue--Morse operator,
$\dim_H(\Sigma)$ is bounded away from zero as $\lambda\to\infty$.
Interestingly, the approximation scheme described above does not yield consistent 
results for Thue--Morse, perhaps a reflection of the exotic behavior 
identified in~\cite{LiuQu}.
As an alternative, in Figure~\ref{fig:dimBtm} we show estimates of the box-counting 
dimension for Thue--Morse.  Since for any finite $k$ the covers comprise the finite union of
real intervals, $\log(N_{C_k}(\eps))/\log(1/\eps) \to 1$ as $\eps\to0$.  
However, for finite $\eps$ the shape of the curves in Figure~\ref{fig:dimBtm} 
can suggest rough values for $\bcd(\Sigma)$; cf.~\cite{TFV89}.
Figure~\ref{fig:dimBpdub} repeats the experiment for period doubling; 
note that the plot for $\lambda=4$ shows good agreement with the estimate
for $\dim_H(\Sigma)$ seen in Figure~\ref{fig:dimHpd}.  (The Hausdorff and
box counting dimensions need not be equal; considerable effort was devoted
over the years to showing $\dim_H(\Sigma)=\bcd(\Sigma)$ for all $\lambda>0$ 
for the Fibonacci case, with the complete result obtained only 
recently~\cite{DGY14}.)

\begin{figure}[t!]
\hfill \includegraphics[scale=0.3]{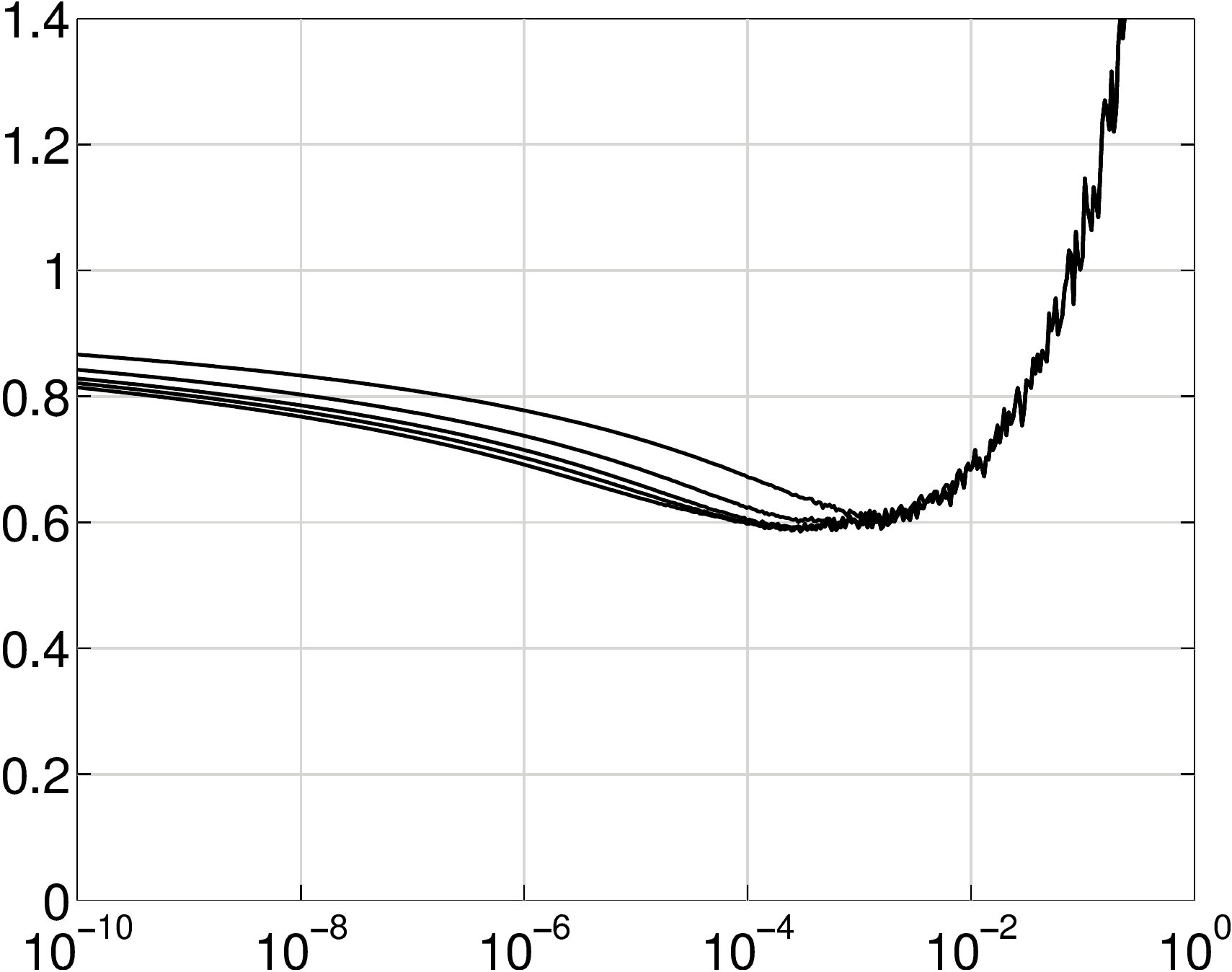} \quad
  \includegraphics[scale=0.3]{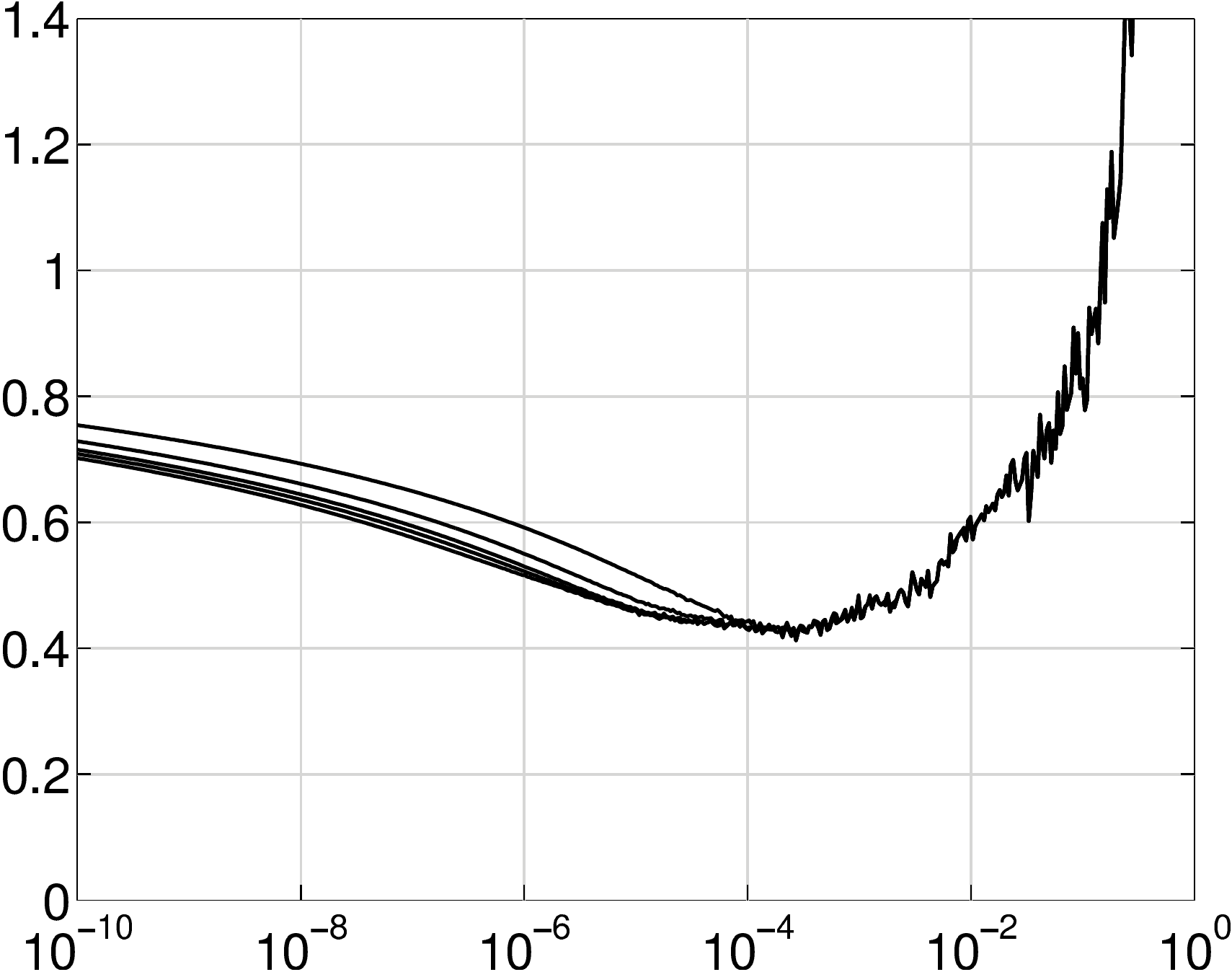}
\begin{picture}(0,0)
\put(-330,17){\rotatebox{90}{\small $\log(N_{C_k}(\eps))/\log(1/\eps)$}}
\put(-238,-6){\small $\eps$}
\put(-77,-6){\small $\eps$}
\put(-194,13){\small $\lambda=4$}
\put(-33,13){\small $\lambda=8$}
\end{picture}
  \caption{\label{fig:dimBtm} 
Estimates of $\bcd(\Sigma)$ for the Thue--Morse operator
for $\lambda = 4, 8$, using covers $C_k$ with $k=5,\ldots, 9$.}  
\end{figure}

\begin{figure}[t!]
\hfill \includegraphics[scale=0.3]{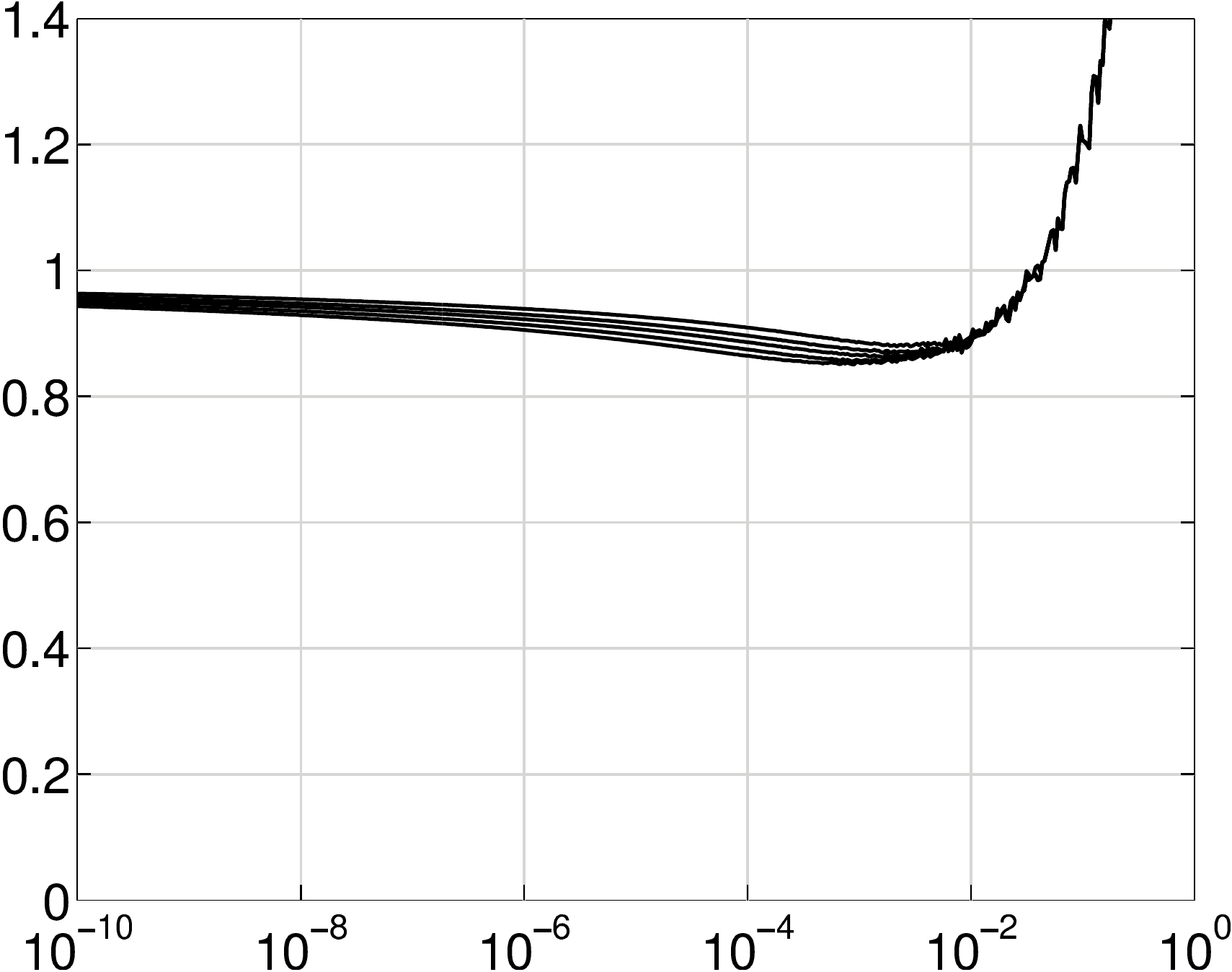} \quad
  \includegraphics[scale=0.3]{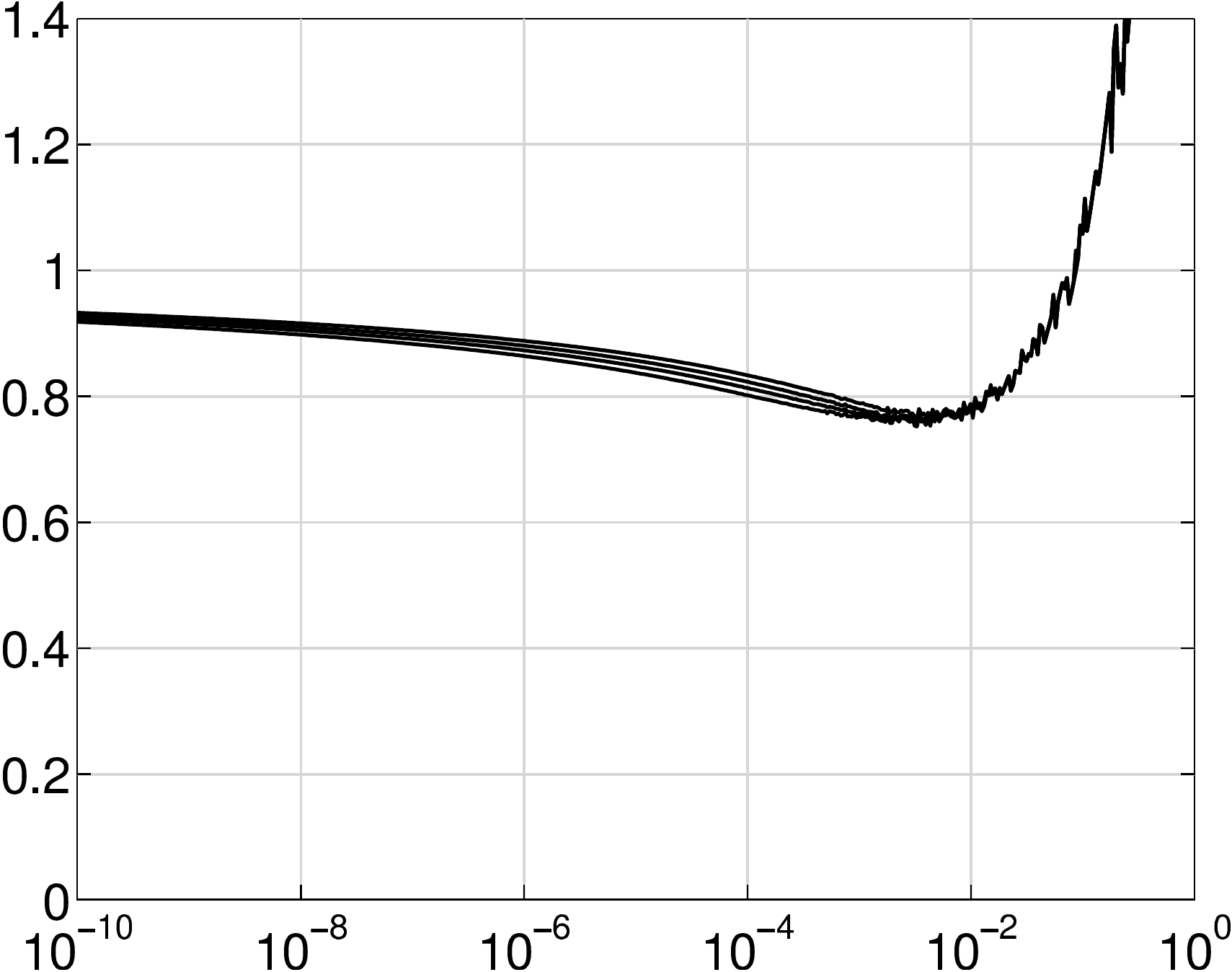}
\begin{picture}(0,0)
\put(-330,17){\rotatebox{90}{\small $\log(N_{C_k}(\eps))/\log(1/\eps)$}}
\put(-238,-6){\small $\eps$}
\put(-77,-6){\small $\eps$}
\put(-194,13){\small $\lambda=4$}
\put(-33,13){\small $\lambda=8$}
\end{picture}
  \caption{\label{fig:dimBpdub} 
Estimates of $\bcd(\Sigma)$ for the period-doubling operator
for $\lambda = 4, 8$, using covers $C_k$ with $k=5,\ldots, 9$.}  
\end{figure}

The spectral covers we have described can behave in rather subtle ways.
To illustrate this fact, Figure~\ref{fig:tm_largestgap} shows the largest gap 
in the Thue--Morse cover $\Sigma_k^\a \cup \Sigma_{k+1}^\a$ as a function of
the parameter~$\lambda$.\ \ 
Bellisard~\cite{Bel90} showed that, for the aperiodic model, 
this gap should behave like $\lambda^{\log 4/\log 3}$ as $\lambda \to 0$.  
The covers satisfy this characterization for intermediate values of $\lambda$,
but appear to behave instead like $\lambda^{2}$ as $\lambda\to0$.\ \ 
The larger the value of $k$ (hence the longer the periodic approximations), 
the larger the range of $\lambda$ values that are consistent with Bellisard's spectral asymptotics. 
This scenario provides another justification for the study of large-$k$ approximations:
for this potential, one must consider large $k$ to approximate the spectrum adequately for small~$\lambda$.

Our algorithm also expedites study of the square Hamiltonians $\CJ_{\rm sq}$
acting on $\ell^2(\Z)\times \ell^2(\Z)$ via
\[ (\CJ_{\rm sq} \psi)_{m,n} = \psi_{m-1,n} +  \psi_{m,n-1} + (b_m + b_n) \psi_{m,n} + \psi_{m+1,n} + \psi_{m,n+1}.\]
The spectrum of this operator is $\Sigma+\Sigma$, 
where $\Sigma$ is the spectrum of the standard 1-dimensional
operator with potential $\{b_n\}$.
When $\Sigma$ is a Cantor set, $\Sigma+\Sigma$ could be an interval,
a union of intervals, a \emph{Cantorval}, or a Cantor set; 
see, e.g.,~\cite{MO94,MMR00,PT93a}.\ \ 
For further details and examples for the Fibonacci case, see~\cite{DEG12} and
the computations, based on $\sigma(\BJ_\pm)$, of Even-Dar Mandel and Lifshitz~\cite{EL08}.\ \ 
For the period doubling and Thue--Morse potentials, 
Figure~\ref{fig:follicle2d} illustrates how the covers of the
spectra for these square Hamiltonians develop in $\lambda$,
revealing values of $\lambda$ for which 
$\Sigma+\Sigma$ cannot comprise a single interval.

\begin{figure}[t!]
\begin{center}
\includegraphics[scale=0.45]{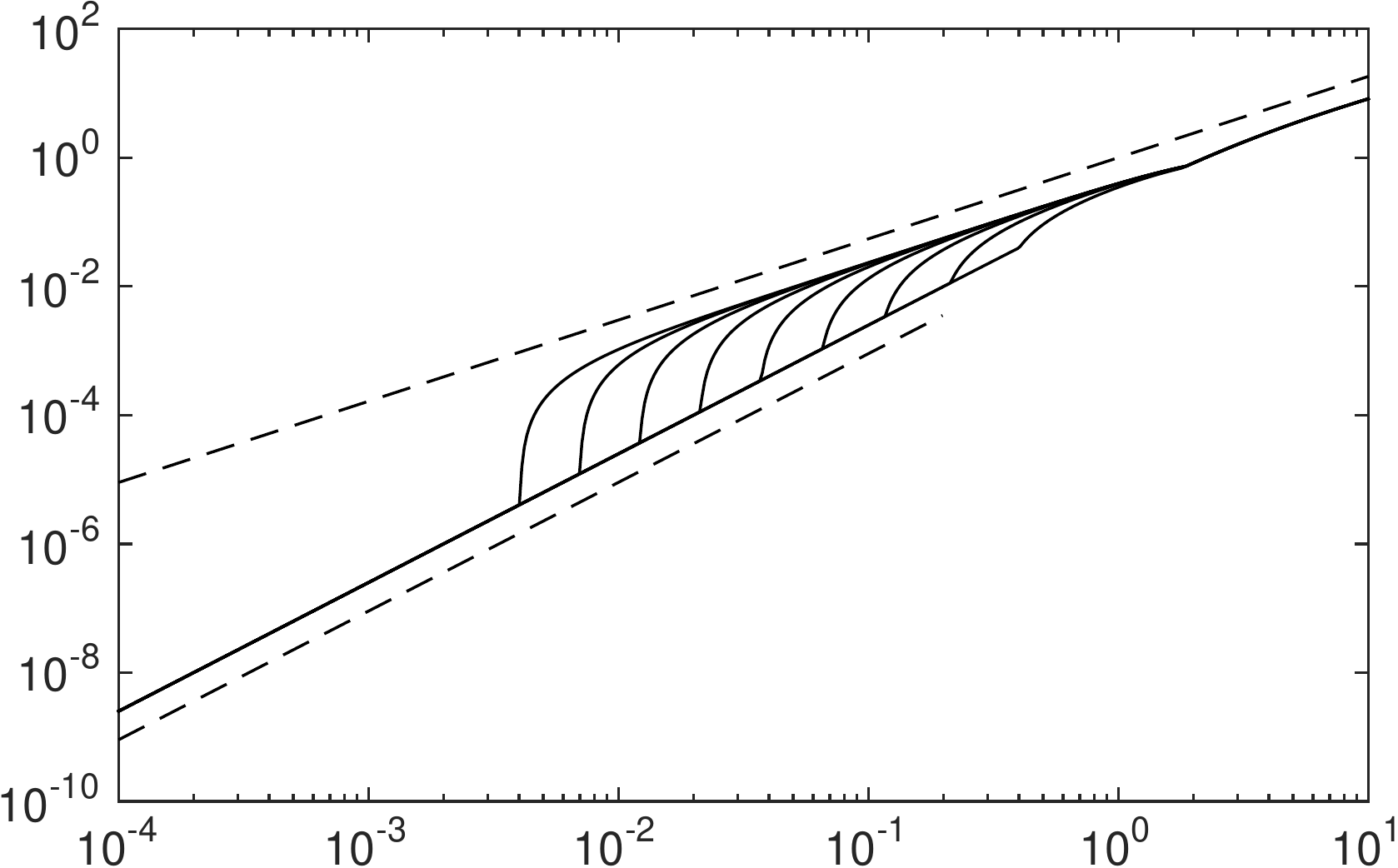}
\begin{picture}(0,0)
\put(-108,-7){\small $\lambda$}
\put(-234,20){\small \rotatebox{90}{largest gap in $\Sigma_k^\a \cup \Sigma_{k+1}^\a$}}
\put(-61,91){\rotatebox{24}{\footnotesize$k=4$}}
\put(-167,50){\rotatebox{24}{\footnotesize $k=12$}}
\put(-197,15){\rotatebox{24}{\footnotesize $\sim \! \lambda^2$}}
\put(-197,66){\rotatebox{17}{\footnotesize $\sim \! \lambda^{\log 4/\log 3}$}}
\end{picture}
\end{center}

\caption{\label{fig:tm_largestgap}
Computed values of the largest gap in the cover $\Sigma_k^\a \cup \Sigma_{k+1}^\a$
of the Thue--Morse spectrum, along with Bellisard's asymptotic description
$\lambda^{\log 4/\log 3}$. 
As $k$ increases, the covers obey the asymptotics for smaller values of $\lambda$.}
\end{figure}

\begin{figure}[t!]
\begin{center}
\includegraphics[scale=0.45]{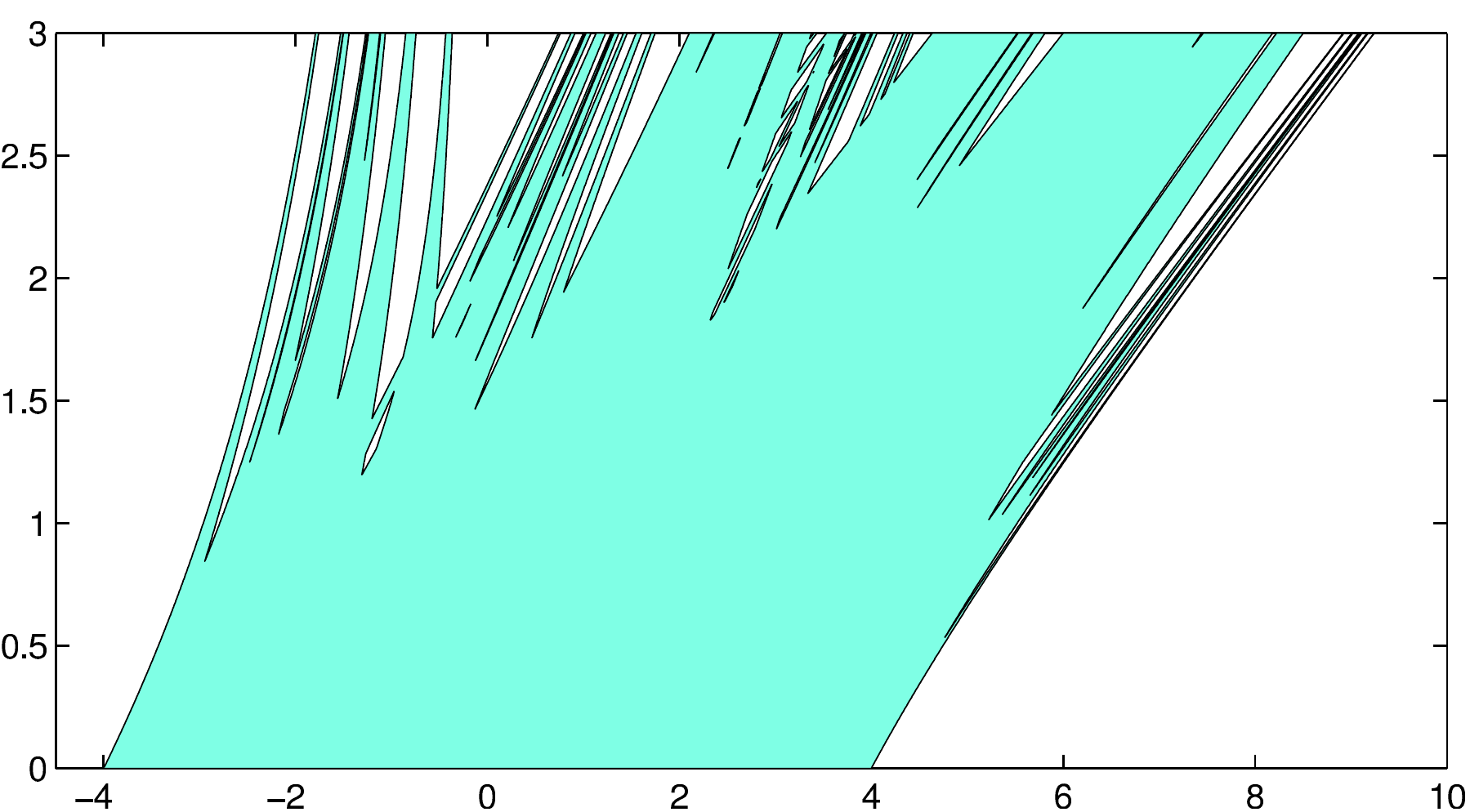}
\begin{picture}(0,0)
\put(-38,23){\small period}
\put(-47,13){\small doubling}
\put(-162,-9){\small $(\Sigma_k^\a \cup \Sigma_k^\b)+(\Sigma_k^\a \cup \Sigma_k^\b)$}
\put(-247,75){\small $\lambda$}
\end{picture}
\\[1.5em]
\includegraphics[scale=0.45]{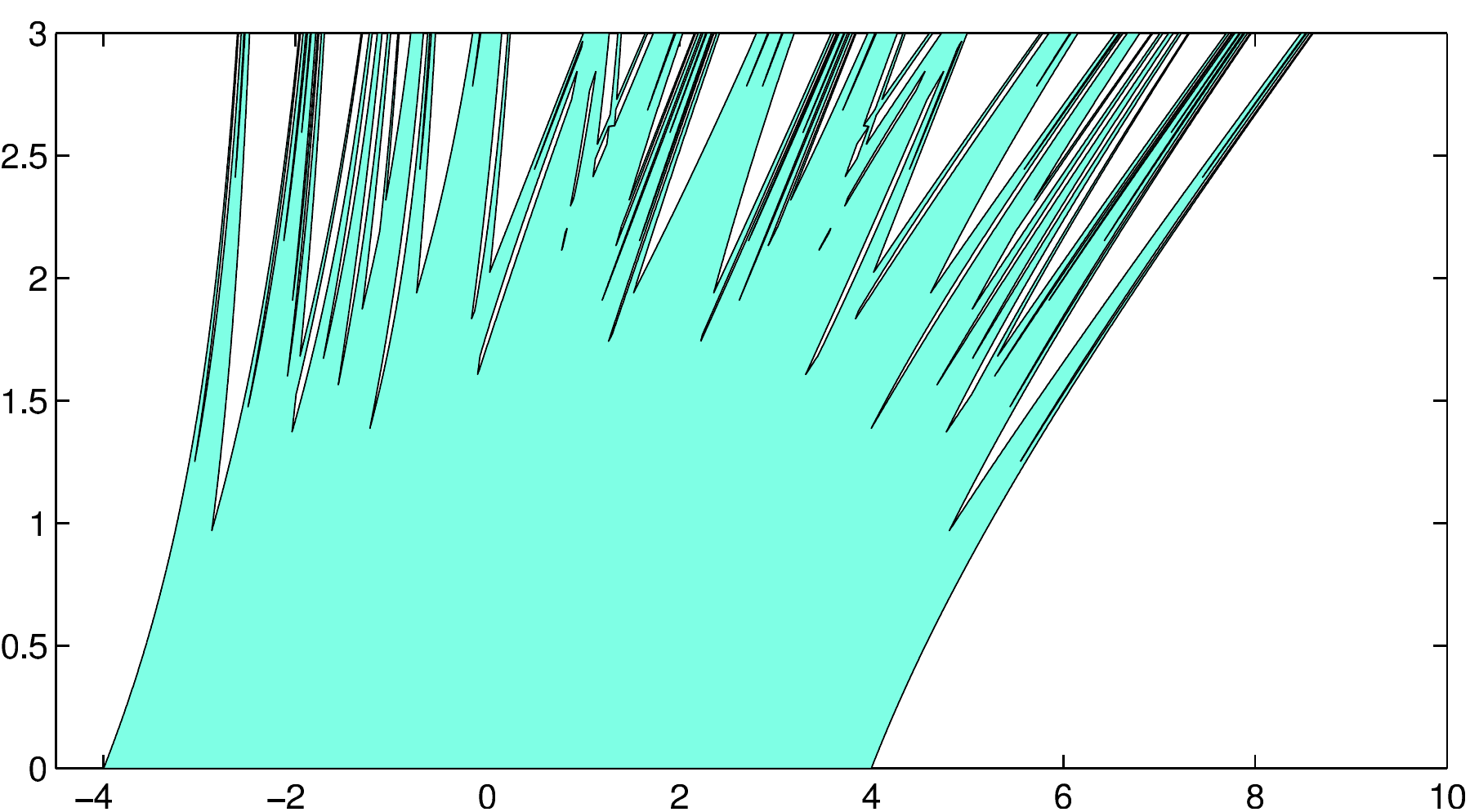}
\begin{picture}(0,0)
\put(-61,13){\small Thue--Morse}
\put(-170,-9){\small $(\Sigma_k^\a \cup \Sigma_{k+1}^\a)+(\Sigma_k^\a \cup \Sigma_{k+1}^\a)$}
\put(-247,75){\small $\lambda$}
\end{picture}
\end{center}

\vspace*{4pt}
\caption{\label{fig:follicle2d}
Spectral covers for the square Hamiltonian for period doubling
and Thue--Morse substitutions, with $k=4$.
}
\end{figure}

\section{Conclusion}

We have presented a simple $O(K^2)$ algorithm to numerically compute the spectrum of a
period-$K$ Jacobi operator that can be implemented in a few lines of code, then
we used this algorithm to estimate spectral quantities associated with quasiperiodic 
Schr\"odinger operators derived from substitution rules.
The algorithm facilitates study of long-period approximations and extensive parameter studies across a family of operators.
Yet even with the efficient algorithm, 
the accuracy of the numerically computed eigenvalues must be carefully monitored,
or even enhanced with extended precision computations.
This tool can expedite numerical experiments to help 
formulate conjectures about spectral properties 
for a range of quasiperiodic operators.

\subsection*{Acknowledgments}
We thank a referee for helpful comments that led us to study 
the largest Thue--Morse gap (Figure~\ref{fig:tm_largestgap}), 
and David Damanik, Paul Munger, 
Beresford Parlett, and Dan Sorensen for fruitful discussions.
We especially thank May Mei for suggesting computations with 
substitution operators and sharing numerical results, and
Adrian Forster for his Thue--Morse computations.

\bibliographystyle{plain}

\end{document}